\newdimen\unit\newdimen\psep\newcount\nd\newcount\ndx\newbox\dotb\newbox\ptbox
\newdimen\dx\newdimen\dy\newdimen\dxx\newdimen\dyy\newdimen\hgt
\newdimen\xoff\newdimen\yoff
\newcommand\clap[1]{\hbox to 0pt{\hss{#1}\hss}}
\newcommand\vdisk[1]{{\font\dotf=cmr10 scaled #1\dotf.}}
\newcommand\varline[2]{\setbox\dotb\hbox{\vdisk{#1}}\xoff=-.5\wd\dotb
\wd\dotb=0pt\yoff=-.5\ht\dotb\psep=#2\ht\dotb}
\newcommand\varpt[1]{\setbox\ptbox\clap{\vdisk{#1}}\setbox\ptbox
\hbox{\raise-.5\ht\ptbox\box\ptbox}}
\newcommand\cpt{\copy\ptbox}
\newcommand\point[3]{\rlap{\kern#1\unit\raise#2\unit\hbox{#3}}}
\newcommand\setnd[4]{\dx=#3\unit\advance\dx-#1\unit\divide\dx by\psep
\dy=#4\unit\advance\dy-#2\unit\divide\dy by\psep \multiply\dx
by\dx\multiply\dy by\dy\advance\dx\dy\nd=1\advance\dx-1sp
\loop\ifnum\dx>0\advance\dx-\nd sp\advance\nd1\advance\dx-\nd
sp\repeat}
\newcommand\dl[4]{{\setnd{#1}{#2}{#3}{#4}\dline{#1}{#2}{#3}{#4}\nd}}
\newcommand\dline[5]{{\nd=#5\hgt=#2\unit\dx=#3\unit\advance\dx-#1\unit
\divide\dx by\nd\dy=#4\unit\advance\dy-#2\unit\divide\dy by\nd
\advance\hgt\yoff\rlap{\kern#1\unit\kern\xoff\loop\ifnum\nd>1\advance\nd-1
\advance\hgt\dy\kern\dx\raise\hgt\copy\dotb\repeat}}}
\newcommand\ellipse[4]{\qellip{#1}{#2}{#3}{#4}\qellip{#1}{#2}{#3}{-#4}%
\qellip{#1}{#2}{-#3}{#4}\qellip{#1}{#2}{-#3}{-#4}}
\newcommand\qellip[4]{{\setnd{0}{0}{#3}{#4}\dx=\unit\dy=0pt\raise\yoff\rlap{%
\kern#1\unit\kern\xoff\raise#2\unit\hbox{\loop\ifnum\dx>0\rlap{\kern#3\dx
\raise#4\dy\copy\dotb}\hgt=\dx\divide\hgt
by\nd\advance\dy\hgt\hgt=\dy \divide\hgt
by\nd\advance\dx-\hgt\repeat\rlap{\raise#4\dy\copy\dotb}}}}}
\newcommand\bez[6]{{\setnd{#1}{#2}{#3}{#4}\ndx=\nd\setnd{#3}{#4}{#5}{#6}
\ifnum\ndx>\nd\nd=\ndx\fi\dx=#3\unit\advance\dx-#1\unit\dy=#4\unit
\advance\dy-#2\unit\dxx=#5\unit\advance\dxx-#1\unit\dyy=#6\unit\advance
\dyy-#2\unit\advance\dxx-2\dx\advance\dyy-2\dy\divide\dxx
by\nd\divide\dyy by\nd\advance\dx.25\dxx\advance\dy.25\dyy\divide\dx
by\nd\divide\dy by\nd \multiply\nd
by2\dx=100\dx\dy=100\dy\dxx=100\dxx\dyy=100\dyy\divide\dxx by\nd
\divide\dyy by\nd\hgt=#2\unit\raise\yoff\rlap{\kern#1\unit\kern\xoff
\raise\hgt\copy\dotb\loop\ifnum\nd>0\advance\nd-1\advance\hgt0.01\dy
\kern0.01\dx\raise\hgt\copy\dotb\advance\dx\dxx\advance\dy\dyy\repeat}}}
\newcommand\ptu[3]{\point{#1}{#2}{\cpt\raise1ex\clap{$\scriptstyle{#3}$}}}
\newcommand\ptd[3]{\point{#1}{#2}{\cpt\raise-1.8ex\clap{$\scriptstyle{#3}$}}}
\newcommand\ptr[3]{\point{#1}{#2}{\cpt\raise-.4ex\rlap{$\ \scriptstyle{#3}$}}}
\newcommand\ptl[3]{\point{#1}{#2}{\cpt\raise-.4ex\llap{$\scriptstyle{#3}\ $}}}
\newcommand\ptlu[3]{\point{#1}{#2}{\raise.8ex\clap{$\scriptstyle{#3}$}}}
\newcommand\ptld[3]{\point{#1}{#2}{\raise-1.6ex\clap{$\scriptstyle{#3}$}}}
\newcommand\ptlr[3]{\point{#1}{#2}{\raise-.4ex\rlap{$\,\scriptstyle{#3}$}}}
\newcommand\ptll[3]{\point{#1}{#2}{\raise-.4ex\llap{$\scriptstyle{#3}\,$}}}
\newcommand\pt[2]{\point{#1}{#2}{\cpt}}
\newcommand\medline{\varline{800}{.5}}
\newcommand\thnline{\varline{400}{.6}}
\newtheorem{thm}{Theorem}
\newtheorem*{KK}{The Kruskal-Katona Theorem}
\newtheorem{conj}{Conjecture}
\newtheorem{qu}{Question}
\newtheorem{lemma}[thm]{Lemma}
\newtheorem{prop}[thm]{Proposition}
\newtheorem{cor}[thm]{Corollary}
\newtheorem{obs}[thm]{Observation}
\theoremstyle{definition}
\theoremstyle{definition}\newtheorem*{defn}{Definition}
\newcommand{\ds}{\displaystyle}
\newcommand{\ul}{\underline}
\def\A{\mathcal{A}}
\def\B{\mathcal{B}}
\def\d{\partial}
\def\G{\mathcal{G}}
\def\HH{\mathcal{H}}
\def\L{\mathcal{L}}
\def\P{\mathcal{P}}
\def\Q{\mathcal{Q}}
\def\R{\mathcal{R}}
\def\S{\mathcal{S}}
\def\N{\mathbb{N}}
\def\Z{\mathbb{Z}}
\def\le{\leqslant}
\def\ge{\geqslant}
\def\eps{\varepsilon}
\begin{document}
\title[Shadows of ordered graphs]{Shadows of ordered graphs}

\author{B\'ela Bollob\'as}
\address{Trinity College\\ Cambridge CB2 1TQ\\ England, and Department of Mathematical Sciences\\ The University of Memphis\\ Memphis, TN 38152} \email{B.Bollobas@dpmms.cam.ac.uk}

\author{Graham Brightwell}
\address{Department of Mathematics\\ London School of Economics\\ Houghton Street\\London WC2A 2AE\\ England} \email{G.R.Brightwell@lse.ac.uk}

\author{Robert Morris}
\address{IMPA, Estrada Dona Castorina 110, Jardim Bot\^anico, Rio de Janeiro, RJ, Brasil}
\email{rob@impa.br}
\thanks{The first author was supported during this research by NSF grants DMS-0505550, CNS-0721983 and CCF-0728928, and ARO grant W911NF-06-1-0076, and the third by MCT grant PCI EV-8C, a Research Fellowship from Murray Edwards College, Cambridge, and ERC Advanced grant DMMCA}

\begin{abstract}
Isoperimetric inequalities have been studied since antiquity, and in recent decades they have been studied extensively on discrete objects, such as the hypercube. An important special case of this problem involves bounding the size of the shadow of a set system, and the basic question was solved by Kruskal (in 1963) and Katona (in 1968). In this paper we introduce the concept of the shadow $\d\G$ of a collection $\G$ of ordered graphs, and prove the following, simple-sounding statement: if $n \in \N$ is sufficiently large, $|V(G)| = n$ for each $G \in \G$, and $|\G| < n$, then $|\d \G| \ge |\G|$. As a consequence, we substantially strengthen a result of Balogh, Bollob\'as and Morris on hereditary properties of ordered graphs: we show that if $\P$ is such a property, and $|\P_k| < k$ for some sufficiently large $k \in \N$, then $|\P_n|$ is decreasing for $k \le n < \infty$.
\end{abstract}

\maketitle

\section{Introduction}

Of all the closed curves in the plane (of a fixed length), which encloses the largest area? This famous question is known as the isoperimetric problem, and was solved by Steiner~\cite{Steiner} in the 19th century, although the question (and its answer, a circle) was already known in Ancient Greece (see~\cite{Wieg} for a good account). The basic inequality has since been generalized and extended in many beautiful ways; see for example~\cite{Oss,Tala2}.

We shall be interested in discrete analogues of the isoperimetric problem of the following type. Given a graph $G$, and an integer $m \in \N$, what is the minimum (or maximum) size of the vertex- (or edge-) boundary of a set $A \subset V(G)$ of size $m$? This problem has been especially well-studied on the hypercube, $G = \{0,1\}^n$, for which the basic questions were solved by Harper~\cite{Harp1,Harp2} and Hart~\cite{Hart}. More recently, stability results were proved by Friedgut~\cite{Fri1} and by Friedgut, Kalai and Naor~\cite{FKN}. For related work on influences and sharp thresholds, see~\cite{KKL,BKKKL,Tala1,Fri2}, and for results linking isoperimetric inequalities and percolation, see~\cite{ABS,BT1}. A general introduction to the area can be found in~\cite{Bela}.
 
A particular case of this isoperimetric problem was considered by Kruskal~\cite{Krus} in 1963 and Katona~\cite{Kat} in 1968. Given a collection $\A \subset [n]^{(k)}$ of subsets of $\{1,\ldots,n\}$ of size $k$, the shadow $\d \A$ of $\A$ is defined to be
$$\d \A \; := \; \left\{ B \in [n]^{(k-1)} \,:\, B \subset A \textup{ for some } A \in \A \right\}.$$
Kruskal and Katona proved the following well-known theorem. 

\begin{KK}
Let $k \le m \le n$, and let $\A \subset [n]^{(k)}$ be a collection of $k$-sets in $[n]$. If $|\A| \ge {m \choose k}$ then
$$|\d \A| \; \ge \; {m \choose {k-1}}.$$
\end{KK}

Equality holds in the Kruskal-Katona Theorem if (but not only if) $\A$ is an initial segment of the co-lexicographical order. A stability theorem for this result was recently proved by Keevash~\cite{Keev} (see also~\cite{ODW}).

In this paper we shall introduce and study the shadow $\d\G$ of a family of ordered graphs $\G$.  This is one of several possible `two-dimensional' versions of the problem considered by Kruskal and Katona (another is described in Section~\ref{qsec}; see also~\cite{BezBl,FFK,Froh,HHMTZ}). An ordered graph $G$ is a graph together with a linear order on its vertex set; if $|V(G)| = n$, then we identify $V(G)$ with $[n]$, the set $\{1,\ldots,n\}$ with the usual order. Given an ordered graph $G$, and a set of vertices $U \subset V(G)$, let $G[U]$ denote the ordered graph induced by the set $U$, with the inherited order. We shall write $G - v = G[V(G) \setminus \{v\}]$. 

The \emph{shadow} of an ordered graph $G$ is defined to be
$$\d G \; := \; \big\{ H \, : \, H = G - v \textup{ for some } v \in V(G) \big\}$$
and if $\G$ is a collection of ordered graphs then the shadow of $\G$ is
$$\d \G \; := \; \bigcup_{G \in \G} \d G.$$
We shall prove the following sharp lower bound on the size of $\d\G$, in the case where $|\G|$ is not too large. 

\begin{thm}\label{shadow}
Let $n \in \N$, with $n \ge 135$, and let $\G$ be a collection of ordered graphs on $[n]$. If $|\G| < n$, then $|\d \G| \ge |\G|$.
\end{thm}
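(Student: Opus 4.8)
The plan is to prove the contrapositive-style statement by a compression/shifting argument on the ordered graphs, analogous to the Kruskal--Katona theorem but adapted to the fact that deleting a vertex from an ordered graph on $[n]$ gives an ordered graph on $[n-1]$ (with the order inherited, i.e.\ relabelled order-isomorphically). First I would set up the right notion of a ``coordinate'': for an ordered graph $G$ on $[n]$ and a position $v \in [n]$, the operation $G \mapsto G - v$ plays the role of a ``lower shadow in direction $v$''. The key structural observation is that $|\d G| \le n$ always, with equality unless $G$ has some symmetry; more importantly, for a single graph $G$ one expects $|\d G| \ge $ (something close to $n$) unless $G$ is highly structured, so the only way $|\G|$ can stay small while $|\d\G|$ is smaller still is if the graphs in $\G$ are forced to share shadows in a very rigid way. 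So the heart of the argument is to understand the ``equality-ish'' cases: collections $\G$ with $|\G| < n$ and $|\d\G| < |\G|$ should be shown not to exist by identifying, for each such putative $\G$, two distinct $G, G' \in \G$ and a pair of positions $v, v'$ with $G - v = G' - v'$, and then tracking how the ``overlap graph'' (on vertex set $\G$, edges recording shared shadow-elements) must be structured.

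The main technical step I would carry out is a \emph{weight/counting argument on the bipartite incidence structure} between $\G$ and $\d\G$: form the bipartite (multi-)graph $B$ with parts $\G$ and $\d\G$, joining $G$ to $H$ whenever $H = G - v$ for some $v$, recording the multiplicity (number of such $v$). Each $G \in \G$ has degree exactly $n$ counted with multiplicity (one for each vertex deleted), so $B$ has exactly $n|\G|$ edges with multiplicity. If $|\d\G| \le |\G| - 1 < n - 1$, then the average multiplicity-degree on the $\d\G$ side is more than $n|\G|/(|\G|-1) > n$, so some $H \in \d\G$ is a deletion-image of at least two distinct graphs in $\G$, or a single graph in many ways; I would push this further to show that the \emph{average number of distinct preimages} is forced to be large, and then derive a contradiction with a bound of the form: a fixed ordered graph $H$ on $[n-1]$ can be obtained as $G - v$ for at most (roughly $n$) pairs $(G,v)$, but for only \emph{boundedly many} distinct $G$ once we also control that these $G$ lie in a small family. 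The constant $136$ will come out of making this bound effective: it is the threshold beyond which the combinatorial estimates (on how many ways one can ``insert a vertex'' into an ordered graph on $[n-1]$ to land in a small collection) beat the trivial ones.

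The hard part will be the \emph{rigidity lemma} at the core: showing that if $\d\G$ is small then the graphs in $\G$ must look like the extremal families described after Figure~1 (edge sets such as $\{ij : 1 \le i < j \le k\}$, $\{k(k+1)\}$, $\{1j : 1 < j \le k\}$, and their left-right / complementation symmetries), because only for such ``interval-type'' or ``near-empty / near-complete'' graphs do consecutive vertex-deletions collapse onto each other. Concretely I expect to argue: among the six symmetry classes, a collection realizing $|\G| < n$ with minimal $|\d\G|$ must consist of an initial segment (in an appropriate linear order on ordered graphs refining the shadow order), and then a direct computation shows $|\d\G| \ge |\G|$ on every such segment, using $n \ge 136$ to absorb lower-order error terms and to rule out the small sporadic configurations that can only occur for small $n$. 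Once that structural dichotomy is in hand, the inequality $|\d\G| \ge |\G|$ follows by checking it on each extremal family, which is a short explicit calculation.
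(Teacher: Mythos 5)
Your plan correctly identifies the right intuition (a small shadow forces rigidity), but as written it has two fatal gaps, and the machinery needed to fill them is essentially the entire content of the paper's proof. First, the bipartite degree count is vacuous without a quantitative notion of ``structure'': a single ordered graph consisting of one huge homogeneous block (e.g.\ the empty graph) has $|\d G| = 1$, so there is no a priori lower bound on the number of \emph{distinct} deletion-images of one $G$, and the statement ``average multiplicity-degree on the $\d\G$ side exceeds $n$'' carries no information. The paper's way out is to define the excess $m(G)$ via homogeneous blocks and prove that if $G - a = G - b = G - c$ then $[a,c]$ is a homogeneous block, which yields $|\d G| \ge (n - m(G))/2$; you have no analogue of this, and your proposed bound that a fixed $H \in \d\G$ has ``only boundedly many distinct preimages $G$'' in $\G$ does not follow from $|\G| < n$. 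What is actually true --- and is the technical heart of the argument --- is a bound on \emph{pairwise} coincidences: for two distinct graphs $G_i, G_j$ of small excess, the number of pairs $(u,v)$ with $G_i - u = G_j - v$ is at most an absolute constant ($32$ in the paper), proved via an interval/poset argument together with a four-deletion rigidity lemma ($G - a_i = H - b_i$ for four nested configurations forces a homogeneous block). Nothing in your sketch produces such a bound.

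Second, the ``highly structured'' case cannot be reduced to checking the six extremal families. Ordered graphs with large excess (many vertices inside large homogeneous blocks) form a rich class that is not close to any of those families, and handling them requires a genuinely different mechanism: the paper partitions $\G$ into \emph{types}, maps each type class into $\Z^{d}(m)$ by recording block sizes, and invokes an isoperimetric inequality there (proved by compressions, with a careful analysis of the equality case given by ``lines''), followed by a case analysis of what a line of ordered graphs can look like. Your suggestion that extremal collections are initial segments of ``an appropriate linear order refining the shadow order'' is exactly the kind of compression order that is not known to exist for ordered graphs --- compressions are used in the paper only after passing to the integer-lattice picture within a fixed type. As it stands the proposal is a research plan rather than a proof: the dichotomy between small and large excess, the rigidity lemmas controlling coincidences $G - a = H - b$, and the within-type isoperimetric step are all missing.
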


Theorem~\ref{shadow} is sharp, since there exist families $\G$ of ordered graphs on $[n]$ with $|\d\G| < |\G| = n$, and there exist families $\G_\ell$ with $|\d\G_\ell| = |\G_\ell| = \ell$ for every $\ell < n$ (see Section~\ref{notasec}). The condition $n \ge 135$, on the other hand, is an artifact of the proof, and is almost certainly unnecessary. 

Note that although this result seems simple, ordered graphs are complex objects containing a large amount of information. The special case of Theorem~\ref{shadow} in which each ordered graph has \emph{at most one edge} is equivalent to a special case of an isoperimetric inequality on $\N^3$ first proved by Bollob\'as and Leader using compressions (see Corollary~11 of~\cite{BL}). In order to gain an appreciation of the complexity of the problem, the reader is encouraged to prove Theorem~\ref{shadow} for himself in the case in which each ordered graph has at most two edges, or indeed in the case $n = 4$.

We remark that the technique we shall use to prove Theorem~\ref{shadow} is (to our knowledge) entirely new. It is based on a structural result on shadows of sets (see Lemma~\ref{line}), together with several structural results on pairs of ordered graphs (see Lemmas~\ref{3sacrowd} and~\ref{4inarow}, for example). In particular, none of the standard techniques from the study of hereditary properties of ordered graphs (see below) seem strong enough to prove such a result. 

The class of ordered graphs is an extremely rich and important one in its own right, and includes various well-studied structures (such as sets, permutations, zero-one matrices and partitions) as special cases. For instance, the class of partitions of $[n]$ is in 1-1 correspondence with the ordered graphs on $[n]$ in which all components are cliques, and the class of permutations of $[n]$ is in 1-1 correspondence with the ordered graphs on $[n]$ which do not contain either $H_1$ or $H_2$ as an induced ordered subgraph, where $|V(H_1)| = |V(H_2)| = 3$, $E(H_1) = \{13\}$ and $E(H_2) = \{12,23\}$. (To see this, consider the ordered graph $H(\pi)$ with edge set $\{ij : \pi(i) > \pi(j)\}$ for each permutation $\pi$.) In particular, our study of shadows of ordered graphs was motivated by the earlier work of Klazar~\cite{Klaz1,Klaz2}, Marcus and Tardos~\cite{MT}, Balogh and Bollob\'as~\cite{BB}, Pach and Tardos~\cite{PT}, Balogh, Bollob\'as and Morris~\cite{BBMorder, BBMparts}, and others, on \emph{hereditary properties} of ordered graphs, and related structures. 

A hereditary property of ordered graphs is a collection $\P$ of ordered graphs closed under taking induced ordered subgraphs. For example, the collection of all ordered graphs not containing a given ordered graph $H$ as an induced ordered subgraph is hereditary (these properties are called `primitive'). Observe that the bijections described above both commute with the operation `taking an induced sub-structure' (see for example~\cite{BBMparts, Klaz2, MT}), and hence any hereditary property of partitions or permutations can be thought of as a hereditary property of ordered graphs.

We write $\P_n$ for the collection of ordered graphs in $\P$ with vertex set $[n]$, and call the function $n \mapsto |\P_n|$ the \emph{speed} of $\P$. The speed is a natural measure of the `size' of $\P$, and was introduced (in the context of primitive properties of labelled graphs) by Erd\H{o}s, Kleitman and Rothschild~\cite{EKR} in 1976, and studied further in~\cite{EFR,Alek,BT2,BBS,ABBM} (see also~\cite{ICM,BGP,PS}). The study of general hereditary properties of graphs, and of speeds of labelled graphs below $n^n$, was initiated by Scheinerman and Zito~\cite{SZ}; much stronger results were later proved by Balogh, Bollob\'as and Weinreich~\cite{BBW1,BBW2}.

The following result about hereditary properties of ordered graphs follows easily from Theorem~\ref{shadow}.

\begin{cor}\label{hered}
Let $\P$ be a hereditary property of ordered graphs, and let $135 \le k \in \N$. If $|\P_k| < k$, then $|\P_n| \le |\P_k|$ for every $n \ge k$.
\end{cor}

To explain the significance of this result, we first recall the following result of Balogh, Bollob\'as and Morris~\cite{BBMorder}. If $\P$ is a hereditary property of ordered graphs, then either $|\P_n|$ is a polynomial for sufficiently large values of $n$, i.e.,
$$|\P_n| = \sum_{i = 0}^k a_i {n \choose i}$$ 
for some $k \in \N_0$, some integers $a_0, \ldots, a_k$, and all $n \ge n_0(\P)$, or $|\P_n| \ge F_n$ for every $n \in \N$, where $F_n \sim \left( \frac{1}{2}(1 + \sqrt{5}) \right)^n$ is the Fibonacci sequence. Moreover, if $|\P_n|$ is unbounded then $|\P_n| \ge n$ for every $n \in \N$.  

In other words, a restriction on the value of $|\P_k|$ for one specific value $k \in \N$ allows us to deduce an asymptotic upper bound on $|\P_n|$ as $n \to \infty$. It is possible to read out explicit bounds on $|\P_n|$ from the proof in~\cite{BBMorder}; however, these bounds only hold when $n$ is larger than some (again explicit, but very large) function of $k$ and $|\P_k|$. This is a common failing in theorems of this type (see, for example,~\cite{BBW1,MT}).

Corollary~\ref{hered} corrects this shortcoming in the simplest case: when the function $n \mapsto |\P_n|$ is eventually constant. The result of Balogh, Bollob\'as and Morris~\cite{BBMorder} says that if $|\P_k| < k$ for some $k \in \N$, then $|\P_n| = c$ for some $c \in \N$, and every sufficiently large $n \in \N$. Note that this does not rule out the possibility that $c$ is much larger than $k$, or that $|\P_n|$ exhibits large fluctuations before finally settling down at $c$. Corollary~\ref{hered} says that in fact, $|\P_n|$ is decreasing on $[k,\infty)$. We remark that this property was by no means inevitable; for example, hereditary properties of words can fluctuate wildly (see Theorem~9 of~\cite{BB}, and also~\cite{Leck}). Finally, we note that by Proposition~\ref{ext2} (below), Corollary~\ref{hered} is best possible.

The rest of the paper is organised as follows. In Section~\ref{notasec} we describe the extremal examples, and some of the notation we shall use throughout the paper. In Section~\ref{setsec} we prove a key lemma on shadows of sets, which may be of independent interest. In Section~\ref{typesec} we describe a partition of the ordered graphs on $[n]$ into `types', and use the result of Section~\ref{setsec} to prove a special case of Theorem~\ref{shadow}. The most substantial part of the paper is Section~\ref{smallsec}, in which we deal with ordered graphs with many small `homogenous blocks'. After all this preparation, the pieces are put  together in Section~\ref{proofsec} to prove Theorem~\ref{shadow} and Corollary~\ref{hered}. Finally, in Section~\ref{qsec}, we discuss possible avenues for further research, including a conjectured extension to general linear speeds.

\section{Extremal examples, and notation}\label{notasec}

Before proceeding with the proofs of Theorem~\ref{shadow} and Corollary~\ref{hered}, we shall first show that they are both sharp, by describing here some of the extremal properties. 

\begin{prop}\label{ext1}
For every $\ell, n \in \N$, with $\ell < n$, there exist collections $\G$ and $\G_\ell$ of ordered graphs on $[n]$ such that 
\begin{itemize}
\item $|\G| = n$ and $|\G_\ell| = \ell$.\\[-2ex]
\item $|\d\G| < |\G|$ and $|\d\G_\ell| = |\G_\ell|$.
\end{itemize}
\end{prop}

\begin{proof}
For each $k \in [n]$, let $G_k$ denote the ordered graph on $[n]$ with edge set $E(G_k) = \{ij : 1 \le i < j \le k\}$, and for each $\ell \le n$, set $\G_\ell = \{ G_k : k \in [\ell] \}$.  Now we see that $|\G_n| = n$ and $|\d \G_n | = n - 1$, whereas $| \d \G_\ell |  = | \G_\ell | = \ell$ for each $\ell < n$.
\end{proof}

To see that Corollary~\ref{hered} is also sharp, consider the property $\P^{(1)} = \bigcup_{n \in \N} \G_n$, where $\G_n$ is as defined in the proof of Proposition~\ref{ext1} above. Thus $\P^{(1)}_n$ is the collection of ordered graphs on $[n]$, with edge set $E^{(1)}_k =  \{ij : 1 \le i < j \le k\}$ for some $1 \le k \le n$. Similarly, for each $2 \le j \le 6$, consider the family $\P^{(j)}$, where $E^{(2)}_k = \{ij : i \le k < j\}$, $E^{(3)}_k = \{1k\}$, $E^{(4)}_k = \{k(k+1)\}$, $E^{(5)}_k = \{1j : 1 < j \le k\}$, and $E^{(6)}_k = \{1j : k < j\}$. (Note that the empty ordered graphs are in $\P^{(j)}$ for each $j$. Two of these six properties are pictured below.) In each case, the property $\P^{(j)}$ is hereditary, and $|\P^{(j)}_n| = n$ for every $n \in \N$.  

 Say that two properties of ordered graphs are equivalent under symmetry if one can be obtained from the other by reversing the vertex order, and/or by switching edges and non-edges. It was proved in~\cite{BBMorder} that, up to equivalence under symmetry, these are the only hereditary properties of ordered graphs $\P$ such that $|\P_n| = n$ for every $n \in \N$. 

\begin{prop}\label{ext2}
There is a hereditary property of ordered graphs $\P$ such that $|\P_n| = n$ for every $n \in \N$. 

Moreover, for any decreasing function $f : [k,\infty) \to \N_0$ with $f(k) < k$, there exists a hereditary property of ordered graphs $\Q(f)$ such that $|\Q(f)_n| = f(n)$ for every $n \ge k$.
\end{prop}

\begin{proof}
For the first part, consider any of the properties $\P^{(1)},\ldots,\P^{(6)}$ described above. To prove the second part, let $\Q(f)_n = \P^{(1)}_n$ if $n < k$, and let $\Q(f)_n$ denote the collection of ordered graphs on $[n]$ with edge set $E^{(1)}_\ell$ for some $1 \le \ell \le f(n)$, otherwise. Since $f$ is decreasing on $[k,\infty)$, it follows that $\Q(f)$ is hereditary, as required.
\end{proof}

\vskip0.2in
\[ \unit = 0.28cm \hskip -13\unit
\medline \ellipse{0}{0}{5}{1} \ellipse{11}{0}{5}{1}
\point{0}{0} {$ \dl{-3.9}{0.4}{-2.9}{-0.6} $}
\point{2.2}{0} {$ \dl{-4}{0.6}{-2.8}{-0.6} $}
\point{4.6}{0} {$ \dl{-4}{0.6}{-2.8}{-0.6} $}
\point{6.8}{0} {$ \dl{-3.9}{0.6}{-2.9}{-0.4} $}
\point{-3}{-4}{\small Edge set $\{ij : 1 \le i < j \le k\}$}
\hskip +28\unit
\medline \ellipse{-0.5}{0}{4.5}{1} \ellipse{12.5}{0}{4.5}{1}
\varpt{3000} \pt{5}{0} \pt{7}{0} \bez{5}{0}{6}{1}{7}{0}
\point{4.8}{-1.5}{\tiny $k$} 
\point{0.5}{-4}{\small Edge set $\{k(k+1)\}$}
\point{-30}{-7}{\small Figure 1: Two ordered graphs from extremal families, and their edge sets}
\]
\vskip0.3in

We end the section by collecting, for easy reference, some of the notation which we shall use throughout the paper.

As usual, we write $\Z$ for the integers, $\N$ for the natural numbers, and $\N_0 = \N \cup \{0\}$ for the non-negative integers. We write $A^{(k)}$ for the $k$-subsets of a set $A$. Given an ordered graph $G$, we shall use the relation $<$ to denote both the ordering on the vertices of $G$ and the usual order on the integers, and trust that this will cause no confusion. Let $|G| = |V(G)|$ denote the number of vertices of $G$, let $e(G) = |E(G)|$ denote the number of edges of $G$, and let $N(v)$ denote the neighbourhood of a vertex $v \in V(G)$. If $u,v \in V(G)$ then
$$[u,v] \; = \; \{w \in V(G) \,:\, u \le w \le v\},$$ and, similarly, if $u,v \in \Z$, then $[u,v] = \{w \in \Z : u \le w \le v\}$. We shall write $B < C$ if $b < c$ for every $b \in B$ and $c \in C$, for subsets $B,C \subset \Z$ and subsets $B,C \subset V(G)$.

If $x \in \Z^d$ for some $d \in \N$, then $x_i \in \Z$ will denote the $i^{th}$ coordinate of $x$. Let $e_j = (0,\ldots,0,1,0,\ldots,0) \in \Z^d$ denote the vector with a single $1$ in position $j \in [d]$. If $x,y \in \Z^d$, then $x \le y$ if $x_i \le y_i$ for each $i \in [d]$. As usual, $\| x \|_1 = \sum_j |x_j|$ denotes the $L_1$-norm of the vector $x$. 

If $A \subset [n]$, then we shall write $K[A] = A^{(2)}$ for the set of pairs (i.e., potential edges) on $A$. We shall write $K_n$ for $K[[n]] = [n]^{(2)}$, and $\mathbbm{1}[\cdot]$ for the usual indicator function.

Finally, by `symmetry' we mean both the symmetry between edges and non-edges, and that between left and right.

\section{A lemma on shadows of sets}\label{setsec}

In this section we shall prove a lemma on the shadows of sets which, although straightforward, will be an important tool in the proof of Theorem~\ref{shadow}. We begin with some definitions. For each $n,d \in \N_0$, we write
$$\Z^d(n) \; = \; \left\{ x \in \Z^d \, : \, x_i \ge 0\textup{ and }\sum_i x_i = n \right\}.$$
The shadow of a set $A \subset \Z^d(n)$ is the set
$$\d A \; = \; \{ x \in \Z^d(n-1) \, : \, x \le y \textup{ for some } y \in A \}.$$
A \emph{line} in $\Z^d(n)$ is a set
$$\big\{ x \in \Z^d(n) \, : \, x_i = 0\textup{ if }i \notin \{j,k\} \big\}$$
for some $j,k \in [d]$ with $j \neq k$. Observe that if $\L$ is a line in $\Z^d(n)$, then $|\L| = n + 1$ and $|\d \L| = n$. Moreover, if $A$ is a proper subset of a line, then $|\d A| \ge |A|$.

It follows from Corollary~11 of~\cite{BL} that $|\d A| \ge |A| - 1$ for any set $A \subset \Z^d(n)$ with $|A| < 2n$. The following lemma tells us about the case of equality.

\begin{lemma}\label{line}
Let $n,d \in \N$, and let $A \subset \Z^d(n)$ with $|A| < 2n$. Then either $|\d A| \ge |A|$, or $A$ contains a line.
\end{lemma}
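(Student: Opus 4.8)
The plan is to prove the contrapositive: if $A \subset \Z^d(n)$ with $|A| < 2n$ contains no line, then $|\d A| \ge |A|$. The natural tool is \emph{compression} (as in \cite{BI}), which for shadow problems replaces $A$ by a ``pushed-down'' set $A'$ with $|A'| = |A|$ and $|\d A'| \le |\d A|$. So I would first define, for an ordered pair of coordinates $(i,j)$, the compression $C_{ij}$ that, for each point $x \in A$, slides mass from coordinate $j$ to coordinate $i$ as far as possible subject to staying in $A$-compatible position — concretely, replace $x$ by the point of $\{x + t(e_i - e_j) : t \ge 0\} \cap \Z^d(n)$ that is lowest in $j$ and not already occupied. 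One checks the standard facts: $|C_{ij}A| = |A|$, $|\d C_{ij}A| \le |\d A|$, and iterating the $C_{ij}$ over all pairs terminates at a set that is \emph{compressed}, i.e.\ $C_{ij}A = A$ for all $i,j$. The key point I must track carefully is that compression \emph{cannot create a line}: if $C_{ij}A$ contains a line $L$, then, reversing the compression, $A$ already contained a line (the relevant coordinate-pair argument). Hence it suffices to prove the statement for compressed sets $A$ containing no line.

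Next I would analyse the structure of a compressed set $A$. Writing the coordinates in a fixed order $1,\dots,d$, compressedness forces $A$ to be an ``ideal''-like family concentrated on the first few coordinates: for instance, if any point of $A$ has positive mass in coordinate $j$, then the point with all mass on coordinate $1$, or with mass split between coordinates $1$ and $2$ in the appropriate way, already lies in $A$. More precisely, I expect to show that a compressed $A$ with $|A| < 2n$ either lies entirely within a single line $\{x : x_i = 0 \text{ for } i \notin \{1,2\}\}$ — in which case $A$ is a subset of a line, and since it contains no line it is a \emph{proper} subset, giving $|\d A| \ge |A|$ by the observation preceding the lemma — or $A$ ``uses'' a third coordinate. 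In the latter case I would like to find two disjoint lines (or a line and a proper sub-line) whose shadows I can control, pushing towards a count that exceeds $|A|$; alternatively, isolate a single point $x \in A$ with three positive coordinates and show it contributes $3$ new points to $\d A$ not accounted for elsewhere, which combined with the $\ge |A| - 1$ bound from \cite{BI} gives $\ge |A|$.

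The cleanest route is probably to combine compression with the known inequality $|\d A| \ge |A| - 1$ (Corollary 11 of \cite{BI}, valid here since $|A| < 2n$), so that I only need to \emph{rule out equality} $|\d A| = |A| - 1$ unless $A$ contains a line. For a compressed $A$, equality in the Bollob\'as--Leader bound should be extremely rigid: I would argue that the extremal configurations for that bound are precisely (subsets of) lines, so any compressed $A$ achieving $|\d A| = |A| - 1$ must be a full line, and then un-compressing shows the original $A$ contained a line. The main obstacle, I expect, is exactly this rigidity analysis — pinning down the equality case of the ambient isoperimetric inequality on $\Z^d(n)$ tightly enough to conclude ``line or nothing,'' rather than some messier near-extremal family; the hypothesis $|A| < 2n$ is what should prevent two-line configurations and keep the case analysis finite. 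Handling the bookkeeping when $d$ is large (many coordinates, but only $<2n$ points) without the argument ballooning is the secondary difficulty, and is where compressing down to essentially $3$ relevant coordinates will do the real work.
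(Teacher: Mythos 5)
Your proposal takes a genuinely different route from the paper's. The paper proves Lemma~\ref{line} by induction on $n+d$: it looks at the face $F_d = \{x \in A : x_d = 0\}$, splits into the three cases $|F_d| = 0$, $|F_d| = 1$, $|F_d| \ge 2$, and applies the inductive hypothesis to $F_d$ and to $(A \setminus F_d) - e_d$; ``compression'' is used only locally and in a single direction (the map $x \mapsto x - e_d$ sending $A \setminus F_d$ into $\d A$). You instead propose iterating two-coordinate compressions $C_{ij}$ to reach a globally compressed set, and then doing a rigidity analysis of the equality case of the Bollob\'as--Leader bound $|\d A| \ge |A|-1$.

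However, the step you flag as the ``key point I must track carefully'' --- that compression cannot create a line --- is false, and it is load-bearing for both variants of your plan. Take $d = 3$ and
$$A \; = \; \big\{ \big( \lfloor a/2 \rfloor, \, \lceil a/2 \rceil, \, n - a \big) \, : \, 0 \le a \le n \big\} \; \subset \; \Z^3(n).$$
Then $|A| = n+1 < 2n$ for $n \ge 2$, and $A$ meets each of the three lines of $\Z^3(n)$ in at most two points (the line with $x_1 \equiv 0$ in two points, the other two in one point each), so $A$ contains no line. But $A$ has exactly one point on each $C_{12}$-compression line $\{ x_3 = n-a,\ x_1 + x_2 = a\}$, so $C_{12}A = \{(a,0,n-a) : 0 \le a \le n\}$, which is a full line. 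Thus compressing a line-free set can produce a line, and ``un-compressing'' a compressed line does not exhibit a line in the original set. For this $A$ the chain you need breaks completely: $|\d C_{12}A| = n < n+1 = |C_{12}A| = |A|$, so the bound $|\d A| \ge |\d C_{12}A|$ gives nothing (while a direct count shows $|\d A| = 2n-1 \ge |A|$, so the lemma's conclusion does hold for this $A$ --- it is your proposed derivation that fails to reach it). Both of your suggested routes --- proving the inequality directly for compressed line-free sets, or classifying equality in $|\d A| \ge |A|-1$ among compressed sets and then un-compressing --- pass through this false claim. Any compression-based proof would have to handle a scattered family collapsing onto a single line under $C_{ij}$, and it is not clear this is any easier than the paper's direct induction on $n+d$. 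The further structural claims in your proposal (that a compressed, line-free $A$ with $|A|<2n$ essentially lives on two or three coordinates, and that a designated point with three positive coordinates contributes three ``new'' shadow points) are also left unverified, but the compression/line issue is the fundamental gap.
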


\begin{proof}
The proof is by induction on $n + d$. When $n = 1$ or $d = 1$ the result is trivial since $|A| \le 1$, and if $d = 2$ then $A$ is a subset of the line $\Z^2(n)$. If $n = 2$ but $A$ is not a subset of a line, then $|\{i \in [d] : x_i \ge 1\textup{ for some } x \in A\}| \ge 3$, and so $|\d A| \ge 3$.

Fix $n, d \ge 3$. For each $j \in [d]$, we define the `$j^{th}$ face' $F_j$ of $\Z^d(n)$ to be the set $\{x \in \Z^d(n) : x_j = 0\}$, and write $A_j = A \cap F_j$. We shall refer to the set $\{x \in \d A : x + e_j \in A\}$ as the set obtained by `compressing $A$ in direction $j$'.

There are three cases to consider.\\[-1ex]

\noindent \textbf{Case 1}: $A_d$ is empty.\\[-1ex]

\noindent This case is easy: we simply compress in direction $d$. The set obtained has size $|A|$ and is contained in the shadow.\\[-1ex]

\noindent \textbf{Case 2}: $|A_d| = 1$.\\[-1ex]

\noindent Let $u = (a_1, \ldots, a_{d-1},0) \in A$ be the element of $A_d$. Compressing $A$ in direction $d$ gives us a set of size $|A| - 1$. We claim that either $|\d A| \ge |A|$ or
$$S(u) \; := \; \big\{ x \in \Z^d(n) \,: \, x_i \le a_i\textup{ for each } 1 \le i \le d-1 \big\} \; \subset \; A.$$
Indeed, consider an element $b \in S(u) \setminus A$ with $(b_1,\ldots,b_{d-1})$ maximal in the usual partial order on $\Z^{d-1}$. Noting that $b \neq u$, by maximality we have $b + e_j - e_d \in A$ for some $j \in [d-1]$, and so $b - e_d \in \d A$. But $b \notin A$, so $b - e_d$ was not obtained by compressing in direction $d$. Thus $|\d A| \ge |A|$ as claimed.

Now, if $S(u) \subset A$ then $|A| \ge \prod_i (a_i + 1)$. But $\sum_i a_i = n$, and therefore either $|A| \ge 2n$, or $u = (n,0,\ldots,0)$, say. But in the latter case $S(u)$ is a line, so $A$ contains a line, and we are done.\\[-1ex]

\noindent \textbf{Case 3}: $|A_d| \ge 2$.\\[-1ex]

\noindent Assume, without loss of generality, that $|A_d| \ge |A_i|$ for each $i \le d$. We apply the induction hypothesis (for $n$ and $d-1$) to the set $A_d$ and (for $n-1$ and $d$) to the set $(A \setminus A_d) - e_d$. Note that in the latter case we have $|(A \setminus A_d) - e_d| < 2(n-1)$, since $|A_d| \ge 2$.

Suppose first that neither contains a line. Then $|\d A_d| \ge |A_d|$ and $|\d\big((A \setminus A_d) - e_d\big)| \ge |A| - |A_d|$, and the sets $\d A_d$ and $\d\big((A \setminus A_d) - e_d\big) + e_d \subset \d A$ are disjoint, so $|\d A| \ge |A|$. Note also that a line in $A_d$ is a line in $A$, so we are done in this case as well.

Hence we may assume that there is a line in $(A \setminus A_d) - e_d$, i.e., there is a set $\L \subset A \setminus A_d$ such that $\L - e_d$ is a line in $\Z^d(n-1)$. Thus $|\L| = n$, and $\L - e_d$ is contained in $d - 2$ faces of $\Z^d(n-1)$ (since it is a line). Hence $\L$ is contained in either $d-2$ or $d-3$ faces of $\Z^d(n)$ (depending on whether or not the line is constant in direction $d$).

Thus $\L \subset A_j$ for some $j \neq d$, unless $d = 3$ and $\L = (**1)$. In the former case $|A_j \setminus A_d| \ge n$, and so $|A| \ge 2n$, since we chose $|A_d| \ge |A_j|$. In the latter case however, the set $(**0) \cup (**1)$ of size $2n-1$ is in $|\d A|$, and so we are done.
\end{proof}

We conclude this section by remarking that the result above is sharp. To see this, consider the sets $B = (**0) \cup (**1) \setminus (n\,0\,0)$ and $C = (*\,0\,*) \cup (**0) \setminus (n\,0\,0)$, which have size $2n$, have shadows of size $2n - 1$, and do not contain a line. Note also that, for any $\ell < 2n$, there are subsets $B_\ell \subset B$ and $C_\ell \subset C$ such that $|\B_\ell| = |C_\ell| = |\d B_\ell| = |\d C_\ell| = \ell$. Since $\Z^3(n) \subset \Z^d(n)$ these serve as extremal  examples in any dimension $d \ge 3$.

\section{Homogenous blocks and types of ordered graph}\label{typesec}

In this section we shall define the \emph{type} and the \emph{excess} of an ordered graph, notions which will be crucial in what follows. We shall then deduce a special case of Theorem~\ref{shadow} from Lemma~\ref{line}.

We begin by recalling the definition of a homogeneous block in an ordered graph from~\cite{BBMorder} (see also~\cite{BBW1}). Let $G$ be an ordered graph, and for each $x,y \in V(G)$, say that $x \sim y$ if $N(x) \setminus \{y\} = N(y) \setminus \{x\}$. A \emph{homogeneous block} is a maximal collection $B$ of consecutive vertices in $G$ such that $x \sim y$ for every $x,y \in B$. It is easy to see that $\sim$ is an equivalence relation, and that the homogeneous blocks are subsets of equivalence classes, and thus uniquely determined by $G$.

Now, let $G$ be an ordered graph with homogeneous blocks $B_1, \ldots, B_k$, where $B_1 < \cdots < B_k$ (in the order of $G$). Define $H(G)$ to be the ordered graph with loops, with vertex set $[k]$, in which $ij \in E(H)$ if and only if $u_iu_j \in E(G)$ for every $u_i \in B_i$ and $u_i \neq u_j \in B_j$. Furthermore, let $b_G = (b_1,\ldots,b_k) \in \{1,2\}^k$ satisfy $b_i = 1$ if $|B_i| = 1$, and $b_i = 2$ otherwise.

\begin{defn}
The \emph{type} $T(G)$ of $G$ is defined to be the pair $(H(G),b_G)$.
\end{defn}

Let $\HH(G)$ denote the set of homogeneous blocks of an ordered graph $G$.

\begin{defn}
For any ordered graph $G$, let
$$m(G)  \; := \; |V(G)| \,-\, \| b_G \|_1 \; = \; \sum_{B \in \HH(G), |B| \ge 3} \Big( |B| - 2 \Big),$$
and call $m(G)$ the \emph{excess} of $G$.
\end{defn}

Note that if two ordered graphs have the same number of vertices and are of the same type, then they have the same excess. We may therefore write $m_n(T)$ for the excess of an ordered graph on $[n]$ of type $T$.

In order to state the next lemma, we shall also need the concept of shadows and lines within types. First, given a collection of ordered graphs $\G$ on $[n]$, and a type $T$, let
$$\G_T \; = \; \{G \in \G \,:\, T(G) = T\},$$
and let $\phi : \G_T \to \Z^{d_T}\big(m_n(T)\big)$ denote the obvious map which takes $\G_T$ to a subset of $\Z^{d_T}\big(m_n(T)\big)$, where $d_T$ is the number of homogeneous blocks of size at least two in an ordered graph of type $T$. To spell it out, if the homogeneous blocks in $G$ of size at least two are $B_1, \ldots, B_k$, and $B_1 < \cdots < B_k$, then $\phi(G) = (|B_1| - 2, \ldots, |B_k| - 2)$. Note that $\phi$ is injective for every type $T$. A \emph{line} in $\G_T$ is a set of ordered graphs $\L \subset \G_T$ such that $\phi(\L) = \{ \phi(G) : G \in \L\}$ is a line in $\Z^{d_T}(m_n(T))$, as defined in Section~\ref{setsec}.

Finally, we define the operation $\d_\tau$, which we call taking the \emph{shadow within types}, as follows. Given an ordered graph $G$, let
$$\d_\tau G \; := \; \big\{ H \in \d G \,:\, T(H) = T(G) \big\},$$
and for any collection $\G$ of ordered graphs, let
$$\d_\tau \G \; := \; \bigcup_{G \in \G} \d_\tau G.$$
Note that $\d_\tau \G \subset \d\G$, that the sets $\d_\tau \G_T$ are disjoint, and that $\d_\tau \G = \bigcup_T \d_\tau \G_T$. Observe also that $\d(\phi(\G_T)) = \phi(\d_\tau \G_T)$.

We can now deduce the following lemma from Lemma~\ref{line}.

\begin{lemma}\label{Gline}
Let $n \in \N$, and let $\G$ be a collection of ordered graphs on $[n]$. Then either $|\d \G| \ge |\G|$, or there exists a type $T$ such that $|\d_\tau\G_T| < |\G_T|$. 

If $|\d_\tau\G_T| < |\G_T|$, then either $|\G_T| \ge 2m_n(T)$, or $\G_T$ contains a line.
\end{lemma}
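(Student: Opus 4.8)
The plan is to partition $\G$ according to type and argue that, if no type is `bad' (in the sense that $|\d_\tau \G_T| < |\G_T|$), then the shadows within distinct types are essentially disjoint, so the bounds add up to give $|\d \G| \ge |\G|$. The key point making this work is that taking an induced subgraph can only merge adjacent homogeneous blocks or shrink a block by one vertex, and in the situations relevant here it does not change the type at all: removing a vertex $v$ from a homogeneous block $B$ with $|B| \ge 3$ leaves all blocks intact (the block $B$ just loses a vertex but stays a block of size $\ge 2$, and no new equivalences are created), so $G - v$ has the same type as $G$ and lies in $\d_\tau G$. Thus, if $T(G) = T$ and $G$ has a homogeneous block of size $\ge 3$, then $G$ contributes to $\d_\tau \G_T$, and the map $\phi$ identifies $\d_\tau G$ with the set-theoretic shadow (in the sense of Section~\ref{setsec}) of $\phi(G) \in \Z^{d_T}(m_n(T))$.

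First I would record the structural observation above precisely: for an ordered graph $G$ on $[n]$ of type $T$ with $m_n(T) \ge 1$, the image $\phi(\d_\tau G)$ equals $\d(\phi(G))$ as subsets of $\Z^{d_T}(m_n(T)-1)$, and $\phi$ restricted to graphs of type $T$ on $[n-1]$ is a bijection onto $\Z^{d_T}(m_n(T)-1)$ (since a graph of type $T$ on $n-1$ vertices is determined by $H(G)$, $b_G$, and the block sizes, which are pinned down by the $\phi$-coordinates). Consequently $|\d_\tau \G_T| = |\d(\phi(\G_T))|$ and $|\G_T| = |\phi(\G_T)|$. Types $T$ with $m_n(T) = 0$ have $\d_\tau \G_T = \emptyset$ trivially, but then $|\G_T| \le 1$ (a type with no block of size $\ge 3$ determines $G$ on $[n]$ uniquely up to nothing — all block sizes are forced once $H$, $b_G$, and $n$ are fixed, as the $1$'s and $2$'s sum to $n$ with the number of $2$'s pinned), so these contribute at most one graph each and can be absorbed; I would need to be a little careful here, but in any case such a $\G_T$ either has $|\G_T| \le 1$ or forces $|\d \G| \ge |\G|$ directly via block-merging shadows.

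Next, assuming for contradiction that $|\d \G| < |\G|$ and that no type $T$ satisfies $|\d_\tau \G_T| < |\G_T|$, I would sum: $|\d \G| \ge |\d_\tau \G| = \sum_T |\d_\tau \G_T|$, where the equality uses that $\d_\tau \G_T \subset \d \G_T$ consists of graphs of type $T$, so the union over distinct types is disjoint. By assumption each term satisfies $|\d_\tau \G_T| \ge |\G_T|$, and summing gives $|\d \G| \ge \sum_T |\G_T| = |\G|$, a contradiction. (For types with $m_n(T) = 0$ one uses $|\G_T| \le 1$ and that the single graph's genuine shadow $\d G \setminus \d_\tau G$ is nonempty and can be charged somewhere not already counted; I would either fold this into the disjointness bookkeeping or, more cleanly, observe that if some $\G_T$ with $m_n(T)=0$ is nonempty then a block-merge shadow of its element already lies in $\d \G$ outside every $\d_\tau \G_{T'}$.) Hence there is a type $T$ with $|\d_\tau \G_T| < |\G_T|$. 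Finally, for that type, transporting via $\phi$ gives a set $A = \phi(\G_T) \subset \Z^{d_T}(m_n(T))$ with $|\d A| < |A|$; by Lemma~\ref{line} (applied with $n = m_n(T)$, $d = d_T$), we must have $|A| \ge 2 m_n(T)$ or $A$ contains a line, i.e. $|\G_T| \ge 2 m_n(T)$ or $\G_T$ contains a line.

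The main obstacle I anticipate is the bookkeeping around types with small or zero excess and, relatedly, making the disjointness of the within-type shadows airtight — in particular confirming that removing a vertex from a size-$\ge 3$ block genuinely preserves the type (no accidental new $\sim$-relations across the gap left behind) and that $\phi$ is the right bijection. None of this is deep, but it is where the proof could slip if the definition of type is not handled with care; everything substantive has been outsourced to Lemma~\ref{line}.
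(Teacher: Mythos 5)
Your proof is correct and follows essentially the same route as the paper: pass to $\d_\tau$, use that the $\d_\tau\G_T$ are pairwise disjoint to extract a type $T$ with $|\d_\tau\G_T|<|\G_T|$, and then transport to $\Z^{d_T}(m_n(T))$ via $\phi$ and invoke Lemma~\ref{line}. Your worry about types with $m_n(T)=0$ is unnecessary: for such $T$ the disjunction "$|\G_T|\ge 2m_n(T)$ or $\G_T$ contains a line" is vacuously true (the first alternative holds since $2m_n(T)=0$), so no extra bookkeeping is required.
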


\begin{proof}
Since $\d_\tau \G \subset \d\G$, the sets $\G_T$ partition $\G$, and the sets $\d_\tau \G_T$ partition $\d_\tau \G$, it follows immediately that either $|\d\G| \ge |\G|$ or there exists a type $T$ such that $|\d_\tau \G_T| < |\G_T|$. 

For the second part, we apply Lemma~\ref{line} to the set $\phi(\G_T) \subset \Z^{d_T}(m_n(T))$, and observe that $|\d(\phi(\G_T))| = |\phi(\d_\tau\G_T)| = |\d_\tau \G_T| < |\G_T| = |\phi(\G_T)|$. From the lemma, it follows that either $|\G_T| = |\phi(\G_T)| \ge 2m_n(T)$, or $\phi(\G_T)$ contains a line, as required.
\end{proof}

Using Lemma~\ref{Gline}, we shall deduce Theorem~\ref{shadow} when there are not too many homogeneous blocks. First we make the following observation.

\begin{obs}\label{typechange}
Let $G$ be an ordered graph, let $B \subset V(G)$ be a homogeneous block of size at most $2$, and let $v \in B$. Then $T(G - v) \neq T(G)$.
\end{obs}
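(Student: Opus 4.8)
The plan is to analyze how removing a vertex $v$ from a homogeneous block $B$ of size at most $2$ changes the block structure, and show that the resulting type is genuinely different. I would first reduce to the two cases $|B| = 1$ and $|B| = 2$, and within each case consider the possible interactions with the neighbouring blocks $B^-$ (the block immediately to the left of $B$) and $B^+$ (the block immediately to the right), where either or both may be absent if $B$ is the first or last block.

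If $|B| = 2$, say $B = \{v, w\}$, then $G - v$ still contains $w$, which forms a block by itself unless it merges with $B^-$ or $B^+$. If $w$ does not merge with a neighbour, then the block containing $w$ in $G - v$ has size $1$, whereas $B$ had size $2$, so the vector $b_G$ changes (the corresponding coordinate drops from $2$ to $1$) while $H(G)$ is otherwise unaffected in the relevant position; hence $T(G - v) \ne T(G)$. If $w$ does merge with, say, $B^-$, then in $G$ we had $w \not\sim u$ for $u \in B^-$ (since $B$ was a maximal homogeneous block), which forces $B^- \cup \{w\}$ to be a new block in $G - v$ that did not exist in $G$; this changes $H(G)$ (it has fewer vertices, or at least a different adjacency pattern), so again the type changes. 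The key observation making this work is maximality of homogeneous blocks: $w$ being equivalent to $u \in B^-$ in $G - v$ but not in $G$ means the edge to $v$ was exactly what distinguished them, and the block count or the $b$-vector must register the change.

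If $|B| = 1$, so $B = \{v\}$ and $G - v$ simply deletes a whole block, then $H(G - v)$ has one fewer vertex than $H(G)$ unless deleting $v$ causes $B^-$ and $B^+$ to merge (which can only happen if $B^-$ and $B^+$ become equivalent once $v$ is gone). In the non-merging case the number of blocks strictly decreases, so $T(G - v) \ne T(G)$ trivially since the underlying graphs $H$ have different vertex sets. In the merging case, the block $B^- \cup B^+$ is new and has size $|B^-| + |B^+| \ge 2$; one checks that this again alters $H$ (fewer vertices) and/or $b$. So in every case the type changes. The main obstacle — really the only subtlety — is bookkeeping the merging cases correctly, in particular making sure that a merge always produces a block not present in $G$ and hence a detectable change in $(H(G), b_G)$; this follows cleanly from the fact that blocks are maximal runs of consecutive equivalent vertices, so no two distinct blocks of $G$ can sit inside a single block of $G - v$ without their union being a new block.
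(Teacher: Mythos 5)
Your argument is correct and follows essentially the same route as the paper: observe that deleting a vertex cannot increase the number of homogeneous blocks, so either a block disappears or blocks merge (changing $H(G)$), or — when $|B|=2$ and nothing merges — the surviving singleton $\{w\}$ changes the $b$-vector at that coordinate. The paper packages this more tersely (assume $T(G-v)=T(G)$, so no merges or deletions, forcing $|B|=2$, which then contradicts the $b$-vector being unchanged), but the content is identical.
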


\begin{proof}
Let $B_1 < \dots < B_k$ be the homogeneous blocks of $G$. Removing a vertex cannot increase the number of homogeneous blocks, since if $x \sim y$ in $G$ (for some $x,y \neq v$) then $x \sim y$ in $G - v$. Thus, if $T(G - v) = T(G)$, then $G - v$ has homogeneous blocks $B_1 \setminus \{v\} < \ldots < B_k \setminus \{v\}$, and moreover $|B_j \setminus \{v\}| = 1$ if and only if $|B_j| = 1$. But if $v \in B_j$ and $|B_j| \le 2$ then this is a contradiction. So $T(G - v) \neq T(G)$, as claimed.
\end{proof}

In order to state the next two lemmas, we shall need to define the following family of ordered graphs. Let $\Q'_n$ denote the ordered graphs $G$ on $[n]$ with one of the following properties:
\begin{itemize}
\item[$(a)$] $|\HH(G)| = 2$ and $b_G = (2,2)$.
\item[$(b)$] $\HH(G) = \big\{ A,B,\{u\} \big\}$, and $E(G) = \{uv : v \in A\}$.
\item[$(c)$] $\HH(G) = \big\{ A,B,\{u\} \big\}$, where $A < u < B$ and $E(G) = \{uv : v \in A \cup B\}$. 
\item[$(d)$] $\HH(G) = \big\{ A,B, \{u\}, \{w\} \big\}$, and either $E(G) = \{uw\}$ or $E(G) = \{wz : z \in A \cup B\}$.
\item[$(e)$] $E(G) = \{w(w+1)\}$ for some $w \in [n-1]$.
\end{itemize}
Let $\Q_n$ denote the ordered graphs on $[n]$ which are equivalent under symmetry to some ordered graph in $\Q_n'$. 

We say that a vertex $u \in V(G)$ \emph{distinguishes} two homogeneous blocks, $A,B \subset V(G)$, if $N(u) \cap (A \cup B) \in \{A,B\}$. The following two lemmas, together with Lemma~\ref{Gline}, prove Theorem~\ref{shadow} in the case that all ordered graphs in $\G$ have excess at least $n/2$.

\begin{lemma}\label{2mT}
Let $n \in \N$, let $\G$ be a collection of ordered graphs on $[n]$, and let $T$ be a type. If $\G_T$ contains a line $\L$ then either $|\d \G_T| \ge 2m_n(T) + 1$, or $\L \subset \Q_n$.
\end{lemma}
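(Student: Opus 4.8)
The plan is to show that a line $L$ in $\G_T$ already forces a large shadow, even before we throw in the rest of $\G_T$. Recall that $\phi(L)$ is a line in $\Z^{d_T}(m_n(T))$, so $|L| = m_n(T)+1$ and, by the remarks in Section~\ref{setsec}, $|\d\phi(L)| = m_n(T)$. Each graph in $L$ has exactly two homogeneous blocks of size at least three (or one block of size at least three together with the possibility that $d_T$ is larger but the extra large-block coordinates are zero along the line); what matters is that, ranging over $L$, we see one block $B$ grow from size $2$ up to size $m_n(T)+2$ while a companion block $B'$ shrinks correspondingly. Deleting a vertex \emph{inside} one of these two growing/shrinking blocks keeps the type fixed and realises $\d_\tau$ on the line; deleting a vertex inside a size-$2$ block of $G$ changes the type, by Observation~\ref{typechange}, and so produces graphs \emph{outside} $\G_T$ — and in particular outside $\d_\tau\G_T$.

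First I would count $|\d_\tau L|$: this is exactly $|\d\phi(L)| = m_n(T)$, since $\phi$ is a bijection commuting with the shadow operation within the type. Next I would exhibit $m_n(T)+1$ further graphs in $\d\G_T \setminus \d_\tau L$, one `above' and one `below' the line, by deleting a vertex from a size-$2$ block. Concretely: pick any graph $G \in L$ and a homogeneous block $B$ of $G$ with $|B| = 2$ (such a block exists once $n$ is larger than $m_n(T)$ by a bounded amount — and if no such block exists the statement can only improve, since then the type is essentially rigid and $|\G_T|$ is itself small, so I would handle that degenerate case separately, noting $2m_n(T)+1 \ge |\G_T|$ trivially). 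Deleting one vertex of $B$ from \emph{each} of the $m_n(T)+1$ graphs in $L$ gives $m_n(T)+1$ graphs, all of a type $T' \neq T$, and these are pairwise distinct because the $\phi$-coordinates (the sizes of the large blocks, minus two) are preserved under this deletion and take $m_n(T)+1$ distinct values along $L$. Adding these to the $m_n(T)$ graphs of $\d_\tau L$ yields $|\d\G_T| \ge |\d L| \ge 2m_n(T)+1$. If instead we are in the regime where $|\G_T| \le 2m_n(T)+1$, the bound $|\d\G_T| \ge |\G_T|$ would be what we want — but actually the argument above gives the stronger $2m_n(T)+1$ unconditionally whenever the size-$2$ block exists, so the $\min$ in the statement is only there to cover the small/degenerate types, and I would dispose of those by the observation that a type with no size-$2$ block and bounded excess contains boundedly many graphs on $[n]$.

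The main obstacle I anticipate is the bookkeeping around \emph{which} deletions land outside $\d_\tau L$ and \emph{distinctness} of the resulting graphs: one must be careful that deleting a vertex from the size-$2$ block $B$ does not accidentally create a graph already obtainable as $G' - v'$ for some $G' \in L$ with $T(G'-v') = T$ (it cannot, by Observation~\ref{typechange}, since that graph has the `wrong' type), and that the $m_n(T)+1$ graphs so produced really are distinct rather than collapsing — this is where the preservation of $\phi$-coordinates does the work. A secondary subtlety is that the line might run in the direction of a block that \emph{merges} with a neighbour at one endpoint (the endpoint where a coordinate hits its extreme), but since we only need the shadow to be large we may simply discard that single endpoint graph and still have $m_n(T)$ graphs in $\d_\tau L$ plus $m_n(T)+1$ fresh ones, or re-index to avoid the endpoint entirely. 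Everything else is a routine combination of Lemma~\ref{line}'s equality analysis and Observation~\ref{typechange}.
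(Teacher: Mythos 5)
Your plan genuinely differs from the paper's: the paper removes an arbitrary vertex $v$ outside the two varying blocks $A,B$ and splits according to whether $A$ and $B$ merge in $G-v$, spending most of its effort on a case analysis of the structures where every such removal forces a merge; you instead insist on deleting a vertex of a homogeneous block of size exactly $2$, which (if justified) would rule out the merge of $A$ with $B$ automatically and so could shortcut that analysis. But as written there are two genuine gaps. First, the existence of the size-$2$ block. A line forces every non-varying block of size at least two to have size exactly $2$, so such a block exists precisely when $d_T\ge 3$; your claim that it exists ``once $n$ is larger than $m_n(T)$ by a bounded amount'' is false, since all blocks outside $A\cup B$ may be singletons no matter how large $n$ is. Worse, your disposal of that case is a non sequitur: when $|\G_T|\le 2m_n(T)+1$ the minimum in the statement \emph{is} $|\G_T|$, so you still owe a proof that $|\d\G_T|\ge|\G_T|$; the $\min$ does not make anything ``trivial''. (This degenerate case is in fact fixable: $d_T\le 2$ gives $|\G_T|\le m_n(T)+1$, and one gets $m_n(T)$ within-type shadow graphs plus at least one off-type graph by deleting a vertex of the size-$2$ varying block at an endpoint of the line — but that argument, a miniature of the paper's Case 2, has to be made and is exactly what you skipped.)

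Second, the distinctness of your $m_n(T)+1$ off-type graphs rests on the assertion that the $\phi$-coordinates ``are preserved under this deletion'', which is not quite true and is never proved. Deleting one vertex $c_1$ of a $2$-block $\{c_1,c_2\}$ can cause the surviving twin $c_2$ to be absorbed into a neighbouring block, possibly one of the two varying blocks, so a coordinate can change. What actually saves you is the observation that $c_2$ has exactly the same attachments as $c_1$, so any separation or distinction between $A$ and $B$ furnished by $c_1$ (or by the block's position between them) persists after the deletion; hence $A$ and $B$ never merge with each other, the absorption pattern is the same for every graph in the line, and the two varying sizes still take $m_n(T)+1$ distinct values. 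This is the crux — it is precisely the hypothesis of the paper's easy case, and the collapse of all $m_n(T)+1$ deletions into one graph when $A$ and $B$ do merge is the phenomenon its long case analysis exists to handle. The subtlety you flag instead (an endpoint of the line where a coordinate vanishes) is not the real danger and discarding an endpoint would not address it. With these two points supplied, your route does yield a correct and arguably cleaner proof; without them it does not close.
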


\begin{proof}
Let $G$ be an arbitrary ordered graph in the line $\L$, and observe that, since $G$ is in a line, $G$ has at most two homogeneous blocks with three or more elements. We shall refer to these `large' blocks as $A$ and $B$, and we shall let $|A| = a+2$ and $|B| = b+2$, where $a,b \ge 0$, and $a + b = m := m_n(T)$.

Suppose first that there exists a vertex $v \in V(G) \setminus (A \cup B)$ such that $A$ and $B$ do not merge in $G - v$. We claim that $|\d \G_T| \ge 2m+1$. Indeed, $\d \G_T$ contains $m$ ordered graphs of type $T(G)$ (remove a vertex from $A \cup B$), and it also contains $m + 1$ ordered graphs of type $T(G - v)$ (remove $v$ from each ordered graph in the line). By Observation~\ref{typechange}, these are all distinct.

So assume from now on that no such vertex $v$ exists. The rest of the proof is a rather tedious case analysis to show that $G \in \Q_n$. First observe that if $G[A \cup B]$ is neither complete nor empty then $V(G) = A \cup B$, since $A$ and $B$ cannot merge. Thus $|\HH(G)| = 2$ and $b_G = (2,2)$, so $G \in \Q_n$. 

Next suppose that $G[A \cup B]$ is either complete or empty. Thus, either there exists a homogeneous block $C$ with $A < C < B$, or there exists a vertex $u \in V(G)$ which distinguishes $A$ and $B$. In the latter case, note that $A$ and $B$ do not merge in $G - v$ for any vertex $v \in V(G) \setminus (A \cup B \cup \{u\})$, and hence $V(G) = A \cup B \cup \{u\}$. By symmetry we may assume that $E(G) = \{uv : v \in A\}$, and so $G \in \Q_n$ (case $(b)$).

We may therefore assume, using symmetry and without loss of generality, that $G[A \cup B]$ is empty, and that there exists a homogeneous block $C$ with $A < C < B$. We may assume moreover that $N(u) \cap (A \cup B) \in \{\emptyset,A \cup B\}$ for every $u \in V(G)$. 

Suppose first that there exists $v \in V(G) \setminus (A \cup B \cup C)$. Since $A$ and $B$ merge when $v$ is removed, it follows that $G[A \cup B \cup C]$ is empty. But $C$ is a homogeneous block, distinct from $A$ and $B$, so there must be a vertex $w \in V(G) \setminus (A \cup B \cup C)$ which distinguishes $B$ and $C$. If $|C| = 2$, then removing one of the vertices of $C$ does not cause $A$ and $B$ to merge (since $w$ still distinguishes $B$ and $C$), which is a contradiction, so $C = \{u\}$, say. Moreover, $V(G) = A \cup B \cup \{u,w\}$, since any other vertex could be removed without causing $A$ and $B$ to merge. Thus either $E(G) = \{uw\}$ or $E(G) = \{wz : z \in A \cup B\}$, and hence $G \in \Q_n$ (case $(d)$).

So finally, assume that $V(G) = A \cup B \cup C$. Then either $C = \{u\}$ and $N(u) = A \cup B$, (in which case we are in case $(c)$), or $C = \{w,w+1\}$ and $E(G) = \{w(w+1)\}$ (so we are in case $(e)$). In either case $G \in \Q_n$, as required.
\end{proof}

The next lemma deals with the exceptional cases identified by Lemma~\ref{2mT}.

\begin{lemma}\label{Qn}
Let $n \in \N$, let $\G$ be a collection of ordered graphs on $[n]$, and let $T$ be a type. If $\G_T \cap \Q_n$ contains a line, then $|\d\G_T| \ge n - 3$.

If moreover $|\G| < n$ and $m(G) \ge 2$ for every $G \in \G$, then $|\d\G| \ge |\G|$.
\end{lemma}

\begin{proof}
Let $\L$ be a line in $\G_T \cap \Q_n$ and let $G \in \L$, so $T(G) = T$. Suppose first that $|\HH(G)| = 2$ and $b_G = (2,2)$, and note that in this case $\G_T = \L$, since there are no other ordered graphs of type $T$. Using symmetry, there are only two cases to consider: $E(G) = K[A] \cup K[B]$ and $E(G) = K[A]$. In either case we have $|\G_T| = n - 3$, $|\d_\tau \G_T| = n - 4$, and $|\d \G_T| = n - 2$. (To see this in the first case, note that $\d\G_T$ contains the ordered graph with edge set $K[A] \cup K[B]$ for all $1 \le |A| \le n - 2$. The other case is the same.) 

Thus, if $|\G| < n$ then either $|\d\G| \ge |\G|$, or there are two ordered graphs $H_1,H_2 \in \G \setminus \G_T$ such that $\d H_j \subset \d \G_T$ for each $j \in \{1,2\}$. Since $m(G) \ge 1$ for each $G \in \G$, it follows that $|\d_\tau H_j| \ge 1$ for each $j$, and hence the type  of $H_j$ must be the same as one of the ordered graphs in $\d \G_T \setminus \d_\tau\G_T$. But $|\d \G_T \setminus \d_\tau\G_T| = 2$, and each has only one large homogeneous block, so there is only one choice for the ordered graphs $\{H_1,H_2\}$. But then $K_{n-1} \in \d H_j \setminus \d\G_T$ for some $j \in \{1,2\}$, so we have a contradiction. This shows that $|\d\G| \ge |\G|$, as claimed. 

The other cases are similar, so we shall skip over some of the details. Suppose next that $\HH(G) = \big\{ A,B,\{u\} \big\}$, and $E(G) = \{uv : v \in A\}$. Note that in this case also, $\G_T = \L$. It is easy to check that $|\G_T| = n - 4$, that $|\d_\tau\G_T| = n - 5$, and that $|\d\G_T| = n - 2$. So if $|\d\G| < |\G| < n$, then there are three ordered graphs $H_1,H_2,H_3 \in \G \setminus \G_T$ such that $\d H_j \subset \d \G_T$ for each $j \in \{1,2,3\}$. As before, the type  of $H_j$ must be the same as one of the ordered graphs in $\d \G_T \setminus \d_\tau\G_T$. But $|\d \G_T \setminus \d_\tau\G_T| = 3$, and each has only one large homogeneous block, so there is only one choice for the ordered graphs $\{H_1,H_2,H_3\}$. But then the star centred at $u$ (with $n-2$ edges) is in $\d H_j \setminus \d\G_T$ for some $j \in \{1,2,3\}$, so we have a contradiction. This again shows that $|\d\G| \ge |\G|$. 

If $\HH(G) = \big\{ A,B,\{u\} \big\}$, with $A < u < B$ and $E(G) = \{uv : v \in A \cup B\}$, then $\G_T = \L$, and we have $|\G_T| = n - 4$, $|\d_\tau\G_T| = n - 5$, and $|\d\G_T| = n - 2$. The rest of the proof is exactly the same as in the previous case, except the ordered graph in $\d H_j \setminus \d\G_T$ is the star centred at $n$ (or at $1$). 

 If $\HH(G) = \big\{ A,B, \{u\}, \{w\} \big\}$, and either $E(G) = \{uw\}$ or $E(G) = \{wz : z \in A \cup B\}$, then we again have $\G_T = \L$, and hence $|\G_T| = n - 5$, $|\d_\tau\G_T| = n - 6$, and $|\d\G_T| = n - 3$ (if $E(G) = \{uw\}$) or $|\d\G_T| = n - 2$ (if $E(G) = \{wz : z \in A \cup B\}$). The proof that $|\d\G| \ge |\G|$ is now essentially the same as above; the only complication arises if $E(G) = \{uw\}$ and $|\G| = n - 1$, in which case (if $|\d\G| < |\G|$), then there is a set $\B$ of four ordered graphs $H_1,H_2,H_3,H_4 \in \G \setminus \G_T$ such that $\big| \bigcup_j \d H_j \setminus \d \G_T \big| \le 1$. 
 
Let us assume (by symmetry) that $w = 1$, let $\B_1 \subset \B$ denote the ordered graphs in $\B$ which have the same type as some member of $\d \G_T \setminus \d_\tau\G_T$, and let $\B_2 = \B \setminus \B_1$. If $|\B_2| \ge 2$, then $\big| \bigcup_j \d_\tau H_j \setminus \d\G_T \big| \ge 2$, since $m(G) \ge 2$ for every $G \in \G$. So $|\B_1| \ge 3$, which implies that the ordered graphs in $\B_1$ are uniquely determined, and moreover that the ordered graphs with edge set $\{12\}$ and $\{1(n-1)\}$ are both in $\bigcup_j \d H_j \setminus \d \G_T$. Hence we are done as before.
 
Finally, suppose that $E(G) = \{w(w+1)\}$ for some $w \in [n-1]$. The proof is slightly different in this case, since we may have $\G_T \neq \L$. We have $|\G_T| \ge |\L| = n - 5$ and $|\d\G_T| \ge |\d \L| = n - 3$. In order to show that $|\d\G| \ge |\G|$, we consider the set $\B$ of (either three or four) ordered graphs in $\G \setminus \L$.

Let $\B_1 \subset \B$ denote the ordered graphs in $\B$ of type $T$, let $\B_2 \subset \B$ denote those with the same type as a member of $\d \L \setminus \d_\tau \L$, and let $\B_3 = \B \setminus (\B_1 \cup \B_2)$. Note first that if $\B_1 \neq \emptyset$ then $|\d H_j \setminus \d \L | \ge 2$ for any $H_j \in \B_1$ (remove vertex 1 or vertex $n$), so in this case we are done. Hence we may assume that $\G_T = \L$, and that $\B = \B_2 \cup \B_3$. 

Now, if there is an $H_j \in \B_2$ with more than one edge, then $|\d H_j \setminus \d \L | \ge 2$ (remove vertex 1 or vertex $n$). Thus every $H_j \in \B_2$ has only one homogeneous block with more than two vertices, and so $|\B_2| \le 3$. Moreover, if $|\B_2| = 3$ then $|\bigcup_{\B_2} \d H_j \setminus \d \L | = 2$ (they are the ordered graphs with edge sets $\{12\}$ and $\{(n-2)(n-1)\}$), and so $|\B_2| \le 2$. Finally, observe that 
$$\Big| \bigcup_{H \in \B_3} \d_\tau H \setminus \d \L \Big| \; \ge \; \min\big\{ |\B_3| ,2 \big\},$$ 
since $m(G) \ge 2$ for every $G \in \G$. But $|\B_3| = |\B| - |\B_2| \ge |\B| - 2$, so if $|\B_3| < 2$ then
$$|\d\G| \; = \; |\d \L| \,+\, \Big| \bigcup_{H \in \B} \d H \setminus \d \L \Big| \; \ge \; (n - 3) \,+\, |\B_3| \; \ge \; n - 5 + |\B| \; = \;  |\G|,$$ 
as required.
\end{proof}

We can now, as promised, deduce a special case of Theorem~\ref{shadow}. Note that in this case the result is still sharp, and that we do not need to assume that $n$ is sufficiently large.

\begin{cor}
Let $n \in \N$, and let $\G$ be a collection of ordered graphs on $[n]$. Suppose that $|\G| < n$, and that $m(G) \ge n/2$ for every $G \in \G$. Then $|\d \G| \ge |\G|$.
\end{cor}

\begin{proof}
By Lemma~\ref{Gline}, either $|\d \G| \ge |\G|$, or there exists a type $T$ such that $|\d_\tau\G_T| < |\G_T|$, and hence either $|\G_T| \ge 2m_n(T)$ or $\G_T$ contains a line. But $|\G_T| \le |\G| \le n - 1 < 2m_n(T)$ (since $m_n(T) \ge n/2$), so $\G_T$ must contain a line $\L$. 

Now, if $n \le 2$ then the result is trivial, so assume that $n \ge 3$, and hence that $m(G) \ge 2$ for every $G \in \G$. By Lemma~\ref{2mT}, we have $\L \subset \Q_n$ (since $|\G_T| < 2m_n(T)$). Thus $|\d\G| \ge |\G|$ by Lemma~\ref{Qn}, as claimed.
\end{proof}

\section{Ordered graphs with small excess}\label{smallsec}

In this section we shall prove the following pair of propositions, which deal with the case in which $\G$ contains graphs of small excess. The first tells us that several graphs of small excess have a large joint shadow.

\begin{prop}\label{difftypes}
Let $m,n,t \in \N$, and let $\G = \{G_1, \ldots, G_t\}$, where the $G_i$ are distinct ordered graphs on $[n]$. Suppose that $m(G_i) \le m$ for each $i \in [t]$. Then
$$|\d \G| \; \ge \; \frac{t(n-m)^2}{2(n - m) + 32t}.$$
\end{prop}

We remark that, for small values of $t$, there exist families $\G$ with $|\d \G| \le \frac{t(n - m)}{2}$, so the lemma is close to being best possible. The next proposition tells us that if $m(G)$ is small, then the shadow of $G$ also contains a large number of ordered graphs with small excess.

\begin{prop}\label{vsmall}
Let $G$ be an ordered graph on $[n]$, and let $r \in \N$. Then
$$\left| \big\{ H \in \d G : m(H) \le m(G) + 2r + 1 \big\} \right| \; \ge \; \frac{1}{2} \left( 1 - \frac{1}{r} \right) n \: - \: \frac{m(G)}{2}.$$
\end{prop}

The proofs of Propositions~\ref{difftypes} and~\ref{vsmall} are based on two structural results, Lemmas~\ref{3sacrowd} and~\ref{4inarow}, below. We shall first prove Proposition~\ref{vsmall}, which follows without too much difficulty from Lemma~\ref{3sacrowd}; we then prove Proposition~\ref{difftypes}, which will require considerably more effort. 

We begin with the following simple structural result, which will be used in the proof of both of the propositions above.

\begin{lemma}\label{3sacrowd}
Let $G$ be an ordered graph and let $a,b,c \in V(G)$, with $a < b < c$. If $G - a = G - b = G - c$ then $[a,c]$ is a homogeneous block.
\end{lemma}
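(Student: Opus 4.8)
The statement to prove is Lemma~\ref{3sacrowd}: if $G-a = G-b = G-c$ with $a<b<c$, then $[a,c]$ is a homogeneous block. The plan is to show first that $[a,c]$ is a set of pairwise $\sim$-equivalent vertices (so it is contained in a single homogeneous block), and then that it is a \emph{maximal} such interval.

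First I would establish that any two vertices $x,y \in [a,c]$ satisfy $x \sim y$, i.e.\ $\Gamma(x) \setminus \{y\} = \Gamma(y) \setminus \{x\}$. The key observation is that the three isomorphisms $G - a \cong G - b \cong G - c$ are \emph{order-isomorphisms} of ordered graphs, and each of them, restricted to $V(G) \setminus \{a,b,c\}$, must be the identity: an order-isomorphism between two subsets of $[n]$ that agree outside a small interval is forced to fix every vertex outside that interval, and then by counting it maps the `new' part to itself in an order-preserving way. So, writing the vertices between $a$ and $c$ in order, the map $G-a \to G-b$ sends the vertices of $G-a$ in the interval $(a,c]$ onto the vertices of $G-b$ in the interval $[a,c)\setminus\{b\}$ monotonically, and similarly for the other pairs. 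Chasing these identifications, for any vertex $w \notin [a,c]$ and any $x \in [a,c]$, the adjacency of $w$ to $x$ does not depend on $x$ (the isomorphisms identify the edge $wx$ with an edge $wx'$ for the appropriate neighbour $x'$ of $x$ in $[a,c]$, and iterating sweeps across the whole interval). Likewise adjacencies \emph{inside} $[a,c]$ are controlled: if $x,y,z$ are three consecutive vertices of $[a,c]$, the isomorphisms identify $xy$ with $yz$, forcing a common value. Hence $x \sim y$ for all $x,y \in [a,c]$, so $[a,c]$ lies in one homogeneous block.

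Next I would show maximality, i.e.\ that neither the vertex immediately before $a$ nor the one immediately after $c$ is $\sim$-equivalent to the block. Here one uses that $a$ and $c$ are genuinely distinct vertices of $G$ (so $[a,c]$ has at least, in fact, enough room): if the predecessor $a^- \sim a$, then $a^-$ would be $\sim$ all of $[a,c]$, and one can derive that $G - a^- = G - a$ as well — but this is fine; what actually needs ruling out is that the block properly contains $[a,c]$ in a way that contradicts nothing. Actually the cleaner route: a homogeneous block is \emph{defined} to be a maximal interval of mutually $\sim$-equivalent vertices, so once we know $[a,c]$ consists of mutually equivalent consecutive vertices, it is automatically \emph{contained in} a homogeneous block $B$; to get equality I should instead argue that $B = [a,c]$ is not needed for the applications — rereading, the lemma only asserts $[a,c]$ \emph{is} a homogeneous block, so I do need maximality. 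For that, suppose $B \supsetneq [a,c]$, say $B$ contains the vertex $c+1$. Then $c+1 \sim c \sim a$, and combined with $G-a=G-b=G-c$ one shows $G - c = G - (c+1)$, contradicting that then $b,c,c+1$ would force $[b,c+1]$ equivalent too — hmm, this needs care. The honest statement is: I will show that if $v$ is adjacent (in the order) to the interval $[a,c]$ and $v \sim a$, then we reach a contradiction with $a,b,c$ being the \emph{given} vertices; but likely the paper does not need this and $[a,c]$ meaning `the homogeneous block containing $[a,c]$' — I will follow the paper's convention and prove that $[a,c]$ is precisely an equivalence interval, deferring maximality to the (routine) observation that $a\sim b\sim c$ together with $G-a=G-b=G-c$ pins the block down.

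\textbf{Main obstacle.} The delicate point is the bookkeeping in the first step: correctly tracking what the order-isomorphisms $G-a \cong G-b \cong G-c$ do to the vertices strictly between $a$ and $c$, and deducing that all cross-adjacencies (to vertices outside $[a,c]$) and internal adjacencies are constant across the interval. In particular one must handle the case $c = a+1$ (no interior vertices) and $c = a+2$ separately and make sure the "shift by one" maps chain together to cover the full interval rather than only part of it; the inequality $a<b<c$ is exactly what makes the three shifts overlap enough to sweep everything. I expect this combinatorial chase — rather than any inequality — to be the crux, and the rest to be a short verification.
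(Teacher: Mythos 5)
There is a genuine gap, and it sits exactly at the step you flag as the crux. The identifications you actually describe are: (i) adjacency between $x\in[a,c]$ and a vertex $w\notin[a,c]$ is swept along the interval, and (ii) for consecutive $x,y,z\in[a,c]$ the edge $xy$ is identified with $yz$. All identifications of type (ii) are of the form $ij\mapsto(i+1)(j+1)$, which \emph{preserves the distance} $|i-j|$; the identifications that change the distance of a pair all require one endpoint to lie outside the relevant interval. Consequently your chase establishes only that $\Gamma(x)\setminus[a,c]$ is independent of $x\in[a,c]$ and that, for $x,y\in[a,c]$, whether $xy\in E(G)$ depends only on $|x-y|$ --- i.e.\ exactly the \emph{semi-homogeneity} of Lemma~\ref{gagb}, which already follows from a single equality such as $G-a=G-c$ and is strictly weaker than homogeneity. (For $G$ the path on $\{1,2,3\}$ with edges $12,23$ one has $G-1=G-3$ but $1\not\sim 2$.) So the ``Hence $x\sim y$ for all $x,y\in[a,c]$'' does not follow from what precedes it, and your sketch never actually uses the middle vertex $b$ for the internal adjacencies. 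What is missing is a mechanism for comparing edges of \emph{different lengths} with both endpoints inside $[a,c]$. This is where $b$ is essential: the equality $G-b=G-c$ gives $ij\Leftrightarrow i(j+1)$ for $i<b\le j<c$, and here $i$ may lie in $[a,b)$, so one obtains a length-changing identification internal to $[a,c]$. The paper's proof packages this as: both $[a,c]$ and $[b,c]$ are semi-homogeneous, so if some length $x<c-a$ is realised by an edge $j(j+x)$ straddling $b$, then semi-homogeneity of $[b,c]$ (with $j$ outside it) forces $j(j+x+1)\in E(G)$ as well, whence the distance set $L$ of $[a,c]$ is $\emptyset$ or all of $[c-a]$. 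Some such cross-length step must appear in any correct proof; without it the argument cannot distinguish the three-vertex hypothesis from the two-vertex one, for which the conclusion is false.

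A minor point: your back-and-forth about maximality can simply be dropped. The paper's own proof does not address maximality either, and the lemma is only ever applied in the form ``$a$, $b$ and $c$ lie in a common homogeneous block'', for which mutual $\sim$-equivalence of the vertices of $[a,c]$ suffices.
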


In order to prove Lemma~\ref{3sacrowd}, we shall need the following, slightly more complicated concept. Define a \emph{semi-homogeneous block} in an ordered graph $G$ to be a collection $B$ of consecutive vertices of $G$ such that, for some set $L \subset \N$ and every $x,y \in B$, $N(x) \setminus B = N(y) \setminus B$, and 
$$xy \in E\big( G[B] \big) \; \Leftrightarrow \; |x - y| \in L.$$
For example, if $G$ is the ordered graph on $[5]$ with edge set $E(G) = \{13,14,24\}$, then $[1,4]$ is a semi-homogeneous block, with $L = \{2,3\}$. 

We shall prove Lemma~\ref{3sacrowd} using the following simpler statement. 

\begin{lemma}\label{gagb}
Let $G$ be an ordered graph and let $a,b \in V(G)$, with $a < b$. If $G - a = G - b$, then $[a,b]$ is a semi-homogeneous block in $G$.
\end{lemma}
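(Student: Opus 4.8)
The plan is to unpack what the hypothesis $G - a = G - b$ says coordinate by coordinate, and show it forces exactly the structure of a semi-homogeneous block on $[a,b]$. Write $n = |G|$, and recall that $G - a$ and $G - b$ are ordered graphs on $[n-1]$; the identification of their vertex sets with $[n-1]$ is what makes the equality meaningful. The key bookkeeping device is the order-isomorphism between $V(G) \setminus \{a\}$ and $V(G)\setminus\{b\}$ induced by the identification with $[n-1]$: vertices below $a$ correspond to themselves, vertices above $b$ correspond to themselves, and a vertex $v$ with $a < v \le b$ on the $G-a$ side corresponds to the vertex $v-1$ on the $G-b$ side (equivalently, position $i$ of $[n-1]$ names $v$ in $G-a$ and $v+1$ in $G-b$ for $a \le i \le b-1$). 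The hypothesis says precisely that an edge is present in $G-a$ between two positions iff it is present in $G-b$ between the same two positions.

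First I would record the easy consequences. Taking two positions both $< a-1$ or both $> b-1$ (i.e. naming vertices outside $[a,b]$ on both sides), the edge relation in $G$ is literally being compared with itself, so nothing is learned there; the content comes from positions that straddle or land inside $[a,b]$. Fix a vertex $w \notin [a,b]$. Comparing the edge $\{w, x\}$ for $x \in (a,b)$: on the $G-a$ side position of $x$ names $x$, on the $G-b$ side the same position names $x+1$ (after the shift); iterating this comparison along the chain $a+1, a+2, \ldots, b$ and back to $a$ (using that $w$ itself keeps its name) yields $wa \in E(G) \iff w(a+1) \in E(G) \iff \cdots \iff wb \in E(G)$. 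Hence every vertex $w$ outside $[a,b]$ is adjacent either to all of $[a,b]$ or to none of it, which is exactly the condition $\Gamma(x) \setminus [a,b] = \Gamma(y)\setminus [a,b]$ for all $x,y \in [a,b]$.

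Next, the internal edges: for $x, y \in [a,b]$ with, say, $x < y$, write $x = a + i$ and $y = a + j$ with $0 \le i < j \le b-a$. I would argue that the pair $\{x,y\}$ occupies, in $G - a$, the pair of positions corresponding to a displacement of $j - i$ starting somewhere in $[a,b]$, and in $G-b$ the same pair of positions corresponds to another pair inside $[a,b]$ again at displacement $j-i$ (the shift by one preserves the difference). Chaining these equalities shows that whether $xy \in E(G)$ depends only on $y - x$, i.e. there is a set $L \subseteq \N$ with $xy \in E(G) \iff |x-y| \in L$ for $x,y \in [a,b]$. The only point requiring a little care is that when $x$ or $y$ equals $a$ or $b$ the two relevant positions in one of $G-a$, $G-b$ may fall partly outside $[a,b]$, so one must check the endpoints are still covered by the chain; this is routine, using that $a$ and $b$ themselves are, respectively, the missing vertex on each side, so the comparison involving the other endpoint still sits inside the block. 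Combining the two paragraphs gives that $[a,b]$ is a semi-homogeneous block.

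The main obstacle is purely notational: keeping straight which integer in $[n-1]$ names which vertex of $G$ on each of the two sides, and making sure the "chaining" arguments in the last two paragraphs really do connect the endpoints $a$ and $b$ and not merely the interior. Once a clean notation for the two identifications is fixed, each step is a one-line verification. I expect no genuine difficulty beyond this, and in particular no need for any earlier result in the paper.
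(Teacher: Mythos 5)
Your proposal is correct and follows essentially the same strategy as the paper's proof: read the hypothesis $G - a = G - b$ position by position, then chain the resulting one-step equivalences to show that vertices outside $[a,b]$ are adjacent to all or none of the block, and that edges inside $[a,b]$ depend only on the distance $|x-y|$. The paper organises this as three claims proved via an equivalence relation $\sim$ on edges (your first observation corresponds to the paper's Claims 1 and 2, split by whether $w < a$ or $w > b$, and your second to Claim 3), but the underlying calculations are identical.
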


\begin{proof}
Let $V(G) = [n]$, and define an equivalence relation $\sim$ on the edges of $K_n$ as follows: $e \sim f$ if $G - a = G - b$ implies that either both or neither of $e$ and $f$ are in $G$. We must show that various collections of edges are in the same equivalence class. \\[-1ex]

\noindent \textbf{Claim 1}: If $e = ij$ and $f = i(j+1)$, with $i < a \le j < b$, then $e \sim f$.\\[-1ex]

\noindent To see this, simply observe that $$ij \in G \; \Leftrightarrow \; ij \in G - b \; \Leftrightarrow \; ij \in G - a \; \Leftrightarrow \; i(j+1) \in G.$$

\noindent \textbf{Claim 2}: If $e = ij$ and $f = (i+1)j$, with $a \le i < b < j$, then $e \sim f$.\\[-1ex]

\noindent This follows from Case 1 by symmetry, or since
$$ij \in G \; \Leftrightarrow \; i(j-1) \in G - b \; \Leftrightarrow \; i(j-1) \in G - a \; \Leftrightarrow \; (i+1)j \in G.$$
It follows from Cases 1 and 2 that $N(x) \setminus [a,b] = N(y) \setminus [a,b]$ for every $x,y \in [a,b]$.\\[-1ex]

\noindent \textbf{Claim 3}: If $e = ij$ and $f = (i+1)(j+1)$, with $a \le i < j < b$, then $e \sim f$.\\[-1ex]

\noindent This again follows easily, since
$$ij \in G \; \Leftrightarrow \; ij \in G - b \; \Leftrightarrow \; ij \in G - a \; \Leftrightarrow \; (i+1)(j+1) \in G.$$
Cases 1, 2 and 3 together imply that $[a,b]$ is a semi-homogeneous block, as required.
\end{proof}

Lemma~\ref{3sacrowd} now follows almost immediately.

\begin{proof}[Proof of Lemma~\ref{3sacrowd}]
By Lemma~\ref{gagb}, $[a,c]$ and $[b,c]$ are semi-homogenous blocks. We shall deduce that $[a,c]$ is a homogeneous block. Indeed, since $[a,c]$ is semi-homogeneous there is a set $L \subseteq [c-a]$ such that, if $x,y \in [a,c]$, then $xy \in E(G)$ if and only if $|x - y| \in L$. Note that $[a,c]$ is homogeneous if and only if $L \in \{\emptyset, [c-a]\}$.

Suppose without loss of generality that $1 \in L$, and let $1 \le x \le c - a$ satisfy $[1,x] \subset L$ but $x + 1 \notin L$. Assume that $x \neq c - a$, and observe that there must exist a vertex $j$, with $a \le j < b \le j + x < c$, such that $j(j+x) \in E(G)$. Since $[b,c]$ is a semi-homogenous block it follows that $j(j+x+1) \in E(G)$. But $j$ and $j+x+1$ are in $[a,c]$, so this contradicts the assumption that $x+1 \notin L$. Hence $x = c - a$, and so $[a,c]$ is a homogeneous block, as claimed.
\end{proof}

Using Lemma~\ref{3sacrowd}, we can now easily prove Proposition~\ref{vsmall}. The proof is via the following simple lemma. Given an ordered graph $G$ and $A \subset V(G)$, define $$\d_{[A]} G \; = \; \big\{ H \in \d G \,:\, H = G - a\textup{ for some } a \in A \big\}.$$

\begin{lemma}\label{a-m/2}
Let $G$ be an ordered graph and $A \subset V(G)$. Then $|\d_{[A]} G| \ge \ds\frac{|A| - m(G)}{2}$.
\end{lemma}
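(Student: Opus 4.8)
The plan is to bound the number of vertices $a \in A$ for which $G - a$ coincides with $G - a'$ for some other $a' \in A$, and conclude that $\d_{[A]}G$ omits only a few of the $|A|$ deletions. Concretely, consider the map $a \mapsto G - a$ from $A$ to $\d_{[A]}G$; I want to show this map is ``almost injective''. Say two vertices $a < a'$ in $A$ are \emph{equivalent} if $G - a = G - a'$, and group $A$ into the resulting equivalence classes $C_1, \ldots, C_s$. Then $|\d_{[A]}G| \ge s$, and it suffices to show $\sum_i (|C_i| - 1) \le m(G)$, i.e.\ that the total ``collapse'' $|A| - s$ is at most the excess.

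The key point is Lemma~\ref{3sacrowd}: if a class $C_i$ contains three vertices $a < b < c$, then $[a,c]$ is a homogeneous block of $G$. More carefully, I would argue that if $C_i = \{a^{(1)} < \cdots < a^{(\ell)}\}$ with $\ell \ge 3$, then the whole interval $[a^{(1)}, a^{(\ell)}]$ is a homogeneous block: applying Lemma~\ref{3sacrowd} to consecutive triples $a^{(j)} < a^{(j+1)} < a^{(j+2)}$ shows each $[a^{(j)}, a^{(j+2)}]$ is homogeneous, and these overlapping intervals union to a single homogeneous block $B_i \supseteq C_i$. Distinct classes of size $\ge 3$ give distinct (indeed disjoint) homogeneous blocks $B_i$, since each $B_i$ is a \emph{maximal} run of $\sim$-equivalent vertices and the $C_i$ are disjoint. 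Now for such a class, $|C_i| - 1 \le |B_i| - 1 \le |B_i| - 2 + 1$; but I need the cleaner bound $|C_i| - 1 \le |B_i| - 2$, which holds as long as $|C_i| \le |B_i| - 1$ --- and indeed $C_i$ cannot be all of $B_i$ when $|B_i| \ge 3$, because $B_i$ has at least one more vertex (this needs a small check: if $C_i = B_i$ then... actually one should instead simply use $|C_i| - 1 \le |B_i| - 2$ whenever $\ell \ge 3$ and $|B_i| \ge \ell$, noting $|B_i| \ge \ell$ is immediate and if $|B_i| = \ell = 3$ one has $|C_i| - 1 = 2 > 1 = |B_i| - 2$, so a direct argument is needed here). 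For classes with $|C_i| \le 2$ we have $|C_i| - 1 \le 1$; I would handle these by a separate, more careful counting, pairing each size-two class with a homogeneous block of size $\ge 2$ via Lemma~\ref{gagb} (which says $[a, a']$ is semi-homogeneous, hence forces structure), or by absorbing the deficit differently.

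The main obstacle is exactly this bookkeeping at the boundary: getting the constant right so that each equivalence class $C_i$ of size $\ell_i$ contributes at most $2(\ell_i - 1)$ to $|A|$ while using up at least $\ell_i - 1$ units of excess --- which is what the factor $\tfrac12$ in the statement reflects. So the cleanest route is probably: show $s \ge \tfrac{1}{2}\big(|A| + (\text{number of classes})\big)$ is false in general, and instead directly establish $|A| - s \le m(G) + (\text{number of ``small'' classes})$, then observe $s \ge$ number of small classes, giving $|A| \le 2s + m(G)$, hence $|\d_{[A]}G| \ge s \ge (|A| - m(G))/2$. I expect the size-$\le 2$ classes to be the fiddly case, resolved by charging a size-two class $\{a,a'\}$ either to the homogeneous block of size $\ge 3$ it creates (when $a' > a+1$ or when an adjacent deletion matches) via Lemma~\ref{3sacrowd}, or to its own ``half'' in the $\tfrac12$ count. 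The algebra is routine once the structural input from Lemmas~\ref{gagb} and~\ref{3sacrowd} is in place.
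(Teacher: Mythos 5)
Your argument is correct and rests on the same key input as the paper's proof, namely Lemma~\ref{3sacrowd}; the two arguments are really dual counts of the same thing. The paper simply prunes $A$ to a subset $A'$ containing at most two elements of each homogeneous block, so that $|A'| \ge |A| - m(G)$ by the definition of excess, and observes that Lemma~\ref{3sacrowd} makes the map $a \mapsto G-a$ at most $2$-to-$1$ on $A'$; this avoids all of the bookkeeping you describe. In your fiber-by-fiber formulation the bookkeeping is in fact not fiddly either: a class $C_i$ with $|C_i| \le 2$ contributes at most $2$ to $|A|$ and exactly $1$ to $s$, while a class with $|C_i| = \ell \ge 3$ lies inside a homogeneous block $B_i$ with $|B_i| \ge \ell$ (by the overlapping-triples argument you give), so it contributes $\ell \le 2 + (|B_i| - 2)$; and since distinct classes meet distinct blocks (all vertices of a single homogeneous block already yield the same deleted graph, hence lie in one class), the terms $|B_i|-2$ sum to at most $m(G)$, giving $|A| \le 2s + m(G)$ and hence $s \ge (|A|-m(G))/2$ immediately. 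The inequality $|C_i| - 1 \le |B_i| - 2$ that you chase in the middle of your writeup is the wrong target --- you only ever need $|C_i| - 2 \le |B_i| - 2$, which is trivial --- and that misidentification is the sole reason the small classes and the $|B_i| = \ell = 3$ case looked problematic to you.
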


\begin{proof}
Let $A' \subset A$ contain at most two elements of each homogeneous block. By the definition of $m(G)$, we may choose $A'$ so that $|A'| \ge |A| - m(G)$. By Lemma~\ref{3sacrowd}, no three elements of $A'$ give the same ordered graph when they are removed from $G$. Hence $|\d_{[A]} G| \ge |A'|/2$, as claimed.
\end{proof}

Note that this implies that a single ordered graph $G$ has a shadow of size at least $(n - m(G))/2$. Proposition~\ref{vsmall} says that most of these ordered graphs have excess not much larger than $m(G)$.

\begin{proof}[Proof of Proposition~\ref{vsmall}]
First suppose that $v \in V(G)$ lies in a homogeneous block $B$ of size at least two. We claim that $m(G - v) \le m(G) + 3$. Indeed, the only homogeneous blocks which may merge are $B$ and its neighbours, since any other two which were distinguished by $v$ are still distinguished by the elements of $B \setminus \{v\}$. Furthermore, these blocks only merge if $|B| = 2$, and so $|B \setminus \{v\}| = 1$. When three adjacent blocks merge and one of them is a singleton (a homogeneous block of size one), the excess increases by at most three. Hence $m(G - v) \le m(G) + 3$, as claimed.

So suppose that $v \in V(G)$ is a singleton, and suppose that $m(G - v) \ge m(G) + 2r + 2$. Then the removal of $v$ must cause at least $r$ pairs of adjacent blocks to merge (it could also cause the pair either side of it to merge), since when two blocks merge the excess increases by at most two. But each pair of adjacent blocks can be caused to merge by at most one singleton. There are fewer than $n$ adjacent pairs of blocks; thus, there can be at most $n/r$ singletons such that $m(G - v) \ge m(G) + 2r + 2$.

Letting $A = \{v \in V(G) : m(G - v) \le m(G) + 2r + 1\}$, it now follows, by Lemma~\ref{a-m/2}, that
$$|\{H \in \d G : m(H) \le m(G) + 2r + 1\}| \; \ge \; \ds\frac{|A| - m(G)}{2} \; \ge \; \frac{1}{2} \left( 1 - \frac{1}{r} \right) n \: - \: \frac{m(G)}{2},$$ as claimed.
\end{proof}

We now turn to the proof of Proposition~\ref{difftypes}, which is somewhat more complicated. We begin with an easy observation.

\begin{obs}\label{edgein}
Let $G$ and $H$ be ordered graphs on $[n]$, and let $a,b \in [n]$, with $a \le b$. If $G - a = H - b$, then every edge in $G \triangle H$ has an endpoint in $[a,b]$.
\end{obs}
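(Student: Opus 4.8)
The plan is to unwind the definition of vertex-deletion for ordered graphs and to observe that, because $a \le b$, the identification of vertex sets implied by $G - a = H - b$ fixes everything outside the interval $[a,b]$. In detail: for $v \in [n]$ let $\pi_v \colon [n] \setminus \{v\} \to [n-1]$ be the unique order-preserving bijection, so that $G - v$ is the ordered graph on $[n-1]$ with $\pi_v(i)\pi_v(j) \in E(G-v) \iff ij \in E(G)$ for all distinct $i,j \in [n]\setminus\{v\}$. Then the hypothesis $G - a = H - b$ is equivalent to the assertion that $ij \in E(G) \iff \sigma(i)\sigma(j) \in E(H)$ for all distinct $i, j \in [n] \setminus \{a\}$, where $\sigma = \pi_b^{-1} \circ \pi_a$ is the order-preserving bijection from $[n] \setminus \{a\}$ onto $[n] \setminus \{b\}$.

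The one computation needed is that $\sigma(w) = w$ for every $w \in [n] \setminus [a,b]$ (while $\sigma(w) = w-1$ on $(a,b]$). This is immediate from $a \le b$: if $w < a$ then $w < a \le b$, so neither $\pi_a$ nor $\pi_b^{-1}$ moves $w$; and if $w > b$ then $w > b \ge a$, so $\pi_a(w) = w - 1 \ge b$ and hence $\pi_b^{-1}(w-1) = w$.

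With this in hand the statement follows by contraposition. Suppose $uv$ is a pair of vertices with $u, v \notin [a,b]$; then in particular $u, v \ne a$ (and $u, v \ne b$), so the equivalence above applies to the pair $uv$, and since $\sigma(u) = u$ and $\sigma(v) = v$ we obtain $uv \in E(G) \iff uv \in E(H)$. Therefore no edge of $G \triangle H$ can lie entirely outside $[a,b]$; equivalently, every edge of $G \triangle H$ has an endpoint in $[a,b]$.

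There is no substantial obstacle here; the only points requiring care are the direction of the unit shift in the description of $\sigma$ and the (automatic) fact that $a$ and $b$ themselves belong to $[a,b]$, so the cases $u \in \{a,b\}$ or $v \in \{a,b\}$ need no separate treatment. I expect the write-up to take only a few lines.
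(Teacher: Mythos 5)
Your proposal is correct and takes essentially the same approach as the paper: the paper's phrase ``the edge $ij$ in $G$ corresponds to the edge $f$ in $G-a$'' and ``the edge $ij$ in $H$ also corresponds to the edge $f$ in $H-b$'' is precisely the observation that your order-preserving bijection $\sigma = \pi_b^{-1}\circ\pi_a$ fixes every vertex outside $[a,b]$. You have simply made the implicit relabeling explicit; both arguments then conclude by contraposition in a single line.
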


\begin{proof}
Let $i,j \in [n] \setminus [a,b]$, and suppose the edge $ij$ in $G$ corresponds to the edge $f$ in $G - a$. Then (since $i,j \notin [a,b]$) the edge $ij$ in $H$ also corresponds to the edge $f$ in $H - b$. Since $G - a = H - b$, it follows that $ij \notin G \triangle H$.
\end{proof}

Observation~\ref{edgein} has the following simple, but important, consequence.

\begin{lemma}\label{3dj}
Let $G$ and $H$ be ordered graphs on $[n]$, and let $1 \le a_1 \le b_1 < a_2 \le b_2 < a_3 \le b_3 \le n$. If $G - a_i = H - b_i$ for each $i \in \{1,2,3\}$, then $G = H$.
\end{lemma}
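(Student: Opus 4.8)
The plan is to obtain this as an immediate pigeonhole consequence of Observation~\ref{edgein}. Suppose, for contradiction, that $G \neq H$, and fix an edge $e = ij \in G \triangle H$. For each $i \in \{1,2,3\}$ we have $a_i \le b_i$ and $G - a_i = H - b_i$, so Observation~\ref{edgein} applies and tells us that $e$ has an endpoint lying in $[a_i,b_i]$.

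Now I would observe that the three intervals $[a_1,b_1]$, $[a_2,b_2]$, $[a_3,b_3]$ are pairwise disjoint: the hypotheses give $b_1 < a_2$ and $b_2 < a_3$, hence also $b_1 < a_3$, so no two of these intervals overlap. Since $e$ has only the two endpoints $i$ and $j$, it can meet at most two of the three disjoint intervals, contradicting the previous paragraph. Therefore $G \triangle H = \emptyset$, that is, $G = H$.

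There is no real obstacle here: the only points to verify are that Observation~\ref{edgein} is applicable in each of the three cases (which holds because $a_i \le b_i$ is assumed) and that the ordering $a_1 \le b_1 < a_2 \le b_2 < a_3 \le b_3$ indeed forces the three intervals to be pairwise disjoint. The content of the lemma is exactly that a single edge, having only two endpoints, cannot be ``blamed'' on three separated windows simultaneously.
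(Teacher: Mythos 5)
Your proposal is correct and is essentially identical to the paper's own proof: both apply Observation~\ref{edgein} to each of the three pairs $(a_i,b_i)$ and then observe that an edge, having only two endpoints, cannot meet all three of the pairwise disjoint intervals $[a_i,b_i]$. No substantive difference.
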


\begin{proof}
Suppose $a_1 \le b_1 < a_2 \le b_2 < a_3 \le b_3$. Since $G - a_i = H - b_i$, each edge of $G \triangle H$ has an endpoint in $[a_i,b_i]$, by Observation~\ref{edgein}. But each edge has only two endpoints, so this is impossible, unless $E(G \triangle H) = \emptyset$, in which case $G = H$.
\end{proof}

Lemma~\ref{3dj} deals with the case in which the intervals $(a_i,b_i)$ are disjoint; we need a similar result when we have overlapping intervals. The following lemma provides such a result, but turns out to be somewhat harder to prove.

\begin{lemma}\label{4inarow}
Let $G$ and $H$ be ordered graphs on $[n]$, and let $a_1,\ldots,a_4 \in V(G)$ and $b_1,\ldots, b_4 \in V(H)$ be distinct vertices, with $a_1 < a_2 < a_3 < a_4 \le b_i$ for each $i \in [4]$. Suppose that $G - a_i = H - b_i$ for each $i \in [4]$. Then $[a_2,a_3]$ is a homogeneous block in $G$.
\end{lemma}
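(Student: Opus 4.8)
The plan is to reduce everything to the previously proven tools, Observation~\ref{edgein} and Lemma~\ref{3sacrowd}, by extracting enough equalities of the form $G-x = G-y$ to invoke the latter. The difficulty is that the hypotheses only give $G-a_i = H-b_i$; they do not directly compare two deletions from the \emph{same} graph. So the first step is to combine the four given equalities in pairs: for $i \neq j$ we have $G - a_i = H - b_i$ and $G - a_j = H - b_j$, and since $a_4 \le b_i$ for all $i$, Observation~\ref{edgein} applied to $G-a_i = H-b_i$ tells us that every edge of $G \triangle H$ has an endpoint in $[a_i, b_i]$, and similarly with $j$. As in the proof of Lemma~\ref{3dj}, if we could find three indices with pairwise-disjoint intervals we would be done (with $G = H$, hence the conclusion trivial), so we may assume the intervals $[a_i, b_i]$ pairwise intersect; since their left endpoints are strictly increasing and all lie to the left of every $b_i$, in fact $a_4 \le b_i$ forces substantial overlap. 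The key point to squeeze out is that an edge of $G \triangle H$ must have an endpoint in $[a_i,b_i] \cap [a_j,b_j] = [a_j, b_i]$ (for $i<j$), and repeating with a third index, every edge of $G \triangle H$ has an endpoint in $[a_3, b_1]$, and also — taking the pair $(1,4)$ versus $(2,3)$ and so on — one can pin the endpoints into $[a_2, a_3]$ or $[a_3,b_1]$ type windows. I expect to conclude that $E(G \triangle H)$ is confined: every such edge has an endpoint in a short interval around $[a_2,a_3]$.

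Next I would convert the inter-graph information into intra-graph information. From $G - a_i = H - b_i$ and $G - a_j = H - b_j$ with $i<j$ one gets, after deleting the appropriate further vertex from each side, relations like $G - a_i - a_j' = G - a_j - a_i'$ inside $G$ (matching up vertices carefully through the two isomorphisms with $H$), or more directly: since $a_2,a_3,a_4$ are all $\le$ every $b_i$, deleting $a_4$ from $G$ and the corresponding vertex from $H$, and comparing, yields $G - a_i = G - a_j$ for suitable pairs $i,j$ drawn from $\{a_1,a_2,a_3\}$ — this is where the four vertices (rather than three) are essential, because we need one "spare" vertex $a_4$ sitting to the right of the action and to the left of all the $b_i$ to act as a pivot. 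Concretely, the hypothesis $G - a_i = H - b_i$ for $i \in \{1,2,3,4\}$ should yield $G - a_1 = G - a_2 = G - a_3$ (the vertices $a_1,a_2,a_3$ all lie to the left of $a_4 \le b_i$, so the identifications are order-compatible on the relevant ranges). Once we have $G - a_1 = G - a_2 = G - a_3$ with $a_1 < a_2 < a_3$, Lemma~\ref{3sacrowd} immediately gives that $[a_1,a_3]$, and in particular $[a_2,a_3]$, is a homogeneous block of $G$.

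The main obstacle, and the step I would spend the most care on, is exactly the bookkeeping in that last reduction: making the passage from "four equalities $G-a_i = H-b_i$" to "$G-a_1 = G-a_2 = G-a_3$" rigorous. The subtlety is that the two isomorphisms $G - a_i \cong H - b_i$ and $G - a_j \cong H - b_j$ are the \emph{identity on vertex labels shifted appropriately}, and one must check that composing them (where defined) is again an order-preserving label map, which requires that no $b_i$ interferes — i.e. that the $b_i$ all lie to the right of $a_4$ so that the "shift" on the interval $[1, a_4]$ is consistent. This is precisely why the hypothesis reads $a_4 \le b_i$ for all $i$ rather than something weaker, and it is why three vertices would not suffice. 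I would handle it by fixing the labellings explicitly on $[1, a_1)$, on $(a_3, a_4]$, and in between, tracking how each of $a_1 < a_2 < a_3 < a_4$ shifts under "delete $a_i$", and verifying the resulting identities edge by edge, exactly in the style of Claims~1--3 in the proof of Lemma~\ref{gagb}. Everything after that is an immediate appeal to Lemma~\ref{3sacrowd}.
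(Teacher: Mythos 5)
Your proposed route has a genuine gap at its central step: the claim that the four hypotheses yield $G-a_1=G-a_2=G-a_3$ is not merely unproven but false. Here is a counterexample. Let $n=9$, let $G=H$ be the ordered graph on $[9]$ whose only edge is $\{1,9\}$, and take $a_1=1$, $a_2=2$, $a_3=3$, $a_4=4$, $b_1=9$, $b_2=5$, $b_3=6$, $b_4=7$. Then $G-a_1$ and $H-b_1$ are both the empty graph on $[8]$, and $G-a_i$ and $H-b_i$ for $i\in\{2,3,4\}$ are all the graph on $[8]$ with the single edge $\{1,8\}$, so all hypotheses of Lemma~\ref{4inarow} hold (and its conclusion holds too, since $[2,3]$ lies in the homogeneous block $[2,8]$). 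But $G-a_1$ is empty while $G-a_2$ has an edge, so $G-a_1\neq G-a_2$. The underlying problem with your reduction is that $G-a_1=H-b_1$ and $G-a_2=H-b_2$ compare $G$ with two \emph{different} deletions of $H$, and no ``pivot'' through $a_4$ can identify them: deleting a further vertex from both sides of each equality produces $G-a_1-x=H-b_1-y$ and $G-a_2-x'=H-b_2-y'$, and the right-hand sides need not coincide. A structural warning sign is that your intermediate claim, via Lemmas~\ref{gagb} and~\ref{3sacrowd}, would prove that all of $[a_1,a_3]$ is homogeneous --- strictly stronger than the stated conclusion, which deliberately concerns only the inner interval $[a_2,a_3]$; the example above shows $a_1$ genuinely need not be equivalent to $a_2$.

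Your first paragraph (confining the edges of $G\triangle H$ via Observation~\ref{edgein}) is sound as far as it goes, but it only locates the symmetric difference and does not by itself produce the equivalences $ij\sim(i+1)j$, $ij\sim i(j+1)$ needed for homogeneity of $[a_2,a_3]$. The paper's actual proof works directly with the four equalities as shift relations on edges: it proves, by chaining $ij\in G\Leftrightarrow i(j-\eps_1)\in G-a_v\Leftrightarrow\cdots\Leftrightarrow(i+1)j\in G$ through suitably chosen pairs of indices, that horizontal, vertical and diagonal translates of a potential edge meeting $[a_2,a_3]$ are simultaneously present or absent, with $a_1$ and $a_4$ (and the extreme $b_i$'s) serving only as anchors. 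That careful case analysis over the possible orderings of the $b_i$ is the real content of the lemma and cannot be bypassed by an appeal to Lemma~\ref{3sacrowd}.
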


\begin{proof}
Say that $e \sim f$ if $G - a_i = H - b_i$ for each $i \in \{1,2,3,4\}$ implies that either both or neither of the edges $e$ and $f$ are in $G$. Clearly $\sim$ is an equivalence relation. We must show that various collections of edges are in the same equivalence class.

Let $(c_1,\ldots,c_4)$ be the reordering of $(b_1,\ldots,b_4)$ so that $c_1 < \cdots < c_4$, and let $\sigma$ be the permutation of $\{1,2,3,4\}$ such that $c_j = b_{\sigma(j)}$ for each $j \in [4]$. We shall write $\hat{\sigma} = \sigma^{-1}$.\\[-1ex]

\noindent \textbf{Claim 1}: Let $e = ij$ and $f = (i+1)j$, with $i \in [a_2,a_3)$ and $j \in [n] \setminus ([a_2,a_3] \cup (c_2,c_3])$. Then $e \sim f$.\\[-1ex]

\noindent The claim follows from the following, slightly more complicated statement.\\[-1ex]

\noindent \ul{Subclaim}: Let $1 \le u < v \le 4$, $i \in [a_u,a_v)$ and $j \in [n] \setminus \big( [a_u,a_v] \cup (\min\{b_u,b_v\},\max\{b_u,b_v\}] \big)$. Then $ij \sim (i+1)j$.

\begin{proof}[Proof of subclaim]
Let
$$\eps_1 \; = \; \mathbbm{1}[j > a_v] \; = \; \mathbbm{1}[a_v < j \le \min\{b_u, b_v\}] \,+\, \mathbbm{1}[j > \max\{b_u,b_v\}]$$ and
$$\eps_2 \; = \; \mathbbm{1}[a_v < j \le b_v] \; = \; \mathbbm{1}[a_v < j \le \min\{b_u, b_v\}] \; = \; \eps_1 \,-\, \mathbbm{1}[j > \max\{b_u,b_v\}].$$ Then
\begin{eqnarray*}
i j \in G & \Leftrightarrow & i(j - \eps_1) \in G - a_v \; \Leftrightarrow \; i(j-\eps_1) \in H - b_v \; \Leftrightarrow \; i(j - \eps_2) \in H, \\
& \Leftrightarrow &  i(j-\eps_1) \in H - b_u \; \Leftrightarrow \; i(j-\eps_1) \in G - a_u \; \Leftrightarrow \; (i+1)j \in G.
\end{eqnarray*}
\end{proof}

Now, let $\S = \{(1,3),(1,4),(2,3),(2,4)\}$, and observe that the intersection of the sets $(\min\{b_u,b_v\},\max\{b_u,b_v\}]$ over all pairs $(u,v) \in \S$ is either $(c_2,c_3]$ (in the case that either $b_1,b_2 \le b_3,b_4$ or $b_3,b_4 \le b_1,b_2$), or empty (otherwise). Hence, if we apply the subclaim to each pair $(u,v) \in \S$ then Claim 1 follows.\\[-1ex]

\noindent \textbf{Claim 2}: If $e = ij$ and $f = (i+1)j$, with $i \in [a_2,a_3)$ and  $j \in (c_2,c_3]$, then $e \sim f$.\\[-1ex]

\noindent First observe that either $b_1,b_2 \le c_2 < c_3 \le b_3,b_4$, or $b_3,b_4 \le c_2 < c_3 \le b_1,b_2$, or we are done by the subclaim. We consider the former two cases separately. \\[-1ex]

\noindent Case 1: $b_1 < b_4$.\\[-1ex]

\noindent By the comments above, we may assume that $\sigma(1),\sigma(2) \in \{1,2\}$ and $\sigma(3),\sigma(4) \in \{3,4\}$. Thus, for any $p \in (a_1,a_{\sigma(4)}]$ and $q \in (c_2,c_4)$, we have
\begin{eqnarray*}
pq \in G & \Leftrightarrow \; & (p-1)(q-1) \in G - a_1 \; \Leftrightarrow \; (p-1)(q-1) \in H - c_{\hat{\sigma}(1)} \; \Leftrightarrow \; (p-1)q \in H\\
& \Leftrightarrow \; & (p-1)q \in H - c_4 \; \Leftrightarrow \; (p-1)q \in G - a_{\sigma(4)} \; \Leftrightarrow \; (p-1)(q+1) \in G.
\end{eqnarray*}
Applying this fact to the edge $ij$, we deduce that $ij \in G \; \Leftrightarrow \; i'j' \in G$, where $i' + j' = i + j$, and either $i' = a_2 - 1$ and $j' \in (c_2,c_3+1]$, or $j' = c_3 + 1$ and $i' \in [a_2,a_3)$. In the same way, we moreover deduce that $(i+1)j \in G \; \Leftrightarrow \; (i'+1)j' \in G$.

We claim that $i'j' \in G \; \Leftrightarrow \; (i'+1)j' \in G$. If $i' = a_2 - 1$ and $j' \in (c_2,c_3+1]$ then this follows by the subclaim with $(u,v) = (1,2)$, since $b_1,b_2 \le c_2$. If $j' = c_3 + 1$ and $i' \in [a_2,a_3)$, then it follows by Claim~1. Hence
$$ij \in G \; \Leftrightarrow \; i'j' \in G \; \Leftrightarrow \; (i'+1)j' \in G \; \Leftrightarrow \; (i+1)j \in G,$$ and so $ij \in G \; \Leftrightarrow \; (i+1)j \in G$, as claimed.\\[-1ex]

\noindent Case 2: $b_4 < b_1$.\\[-1ex]

\noindent This time we may assume that $\sigma(1),\sigma(2) \in \{3,4\}$ and $\sigma(3),\sigma(4) \in \{1,2\}$. Thus, for any $p \in [a_1,a_{\sigma(1)})$ and $q \in (c_1,c_3)$, we have
\begin{eqnarray*}
pq \in G & \Leftrightarrow \; & p(q-1) \in G - a_{\sigma(1)} \; \Leftrightarrow \; p(q-1) \in H - c_1 \; \Leftrightarrow \; pq \in H \\
& \Leftrightarrow \; & pq \in H - c_{\hat{\sigma}(1)} \; \Leftrightarrow \; pq \in G - a_1 \; \Leftrightarrow \; (p+1)(q+1) \in G.
\end{eqnarray*}
Applying this to the edge $ij$, we deduce that $ij \in G \; \Leftrightarrow \; i'j' \in G$, where $j' - i' = j - i$, and either $i' = a_2 - 1$ and $j' \in [c_2,c_3)$, or $j' = c_2$ and $i' \in [a_2,a_3)$. In the same way, we deduce that $(i+1)j \in G \; \Leftrightarrow \; (i'+1)j' \in G$.

We claim that $i'j' \in G \; \Leftrightarrow \; (i'+1)j' \in G$. If $i' = a_2 - 1$ and $j' \in [c_2,c_3)$, this follows by the subclaim with $(u,v) = (1,2)$, since $b_1,b_2 \ge c_3$. If $j' = c_2$ and $i' \in [a_2,a_3)$ then it follows by Claim~1. Hence
$$ij \in G \; \; \Leftrightarrow \; i'j' \in G \; \Leftrightarrow \; (i'+1)j' \in G \; \Leftrightarrow \; (i+1)j \in G,$$ as claimed.\\[-1ex]

\noindent \textbf{Claim 3}: Let $e = ij$, $f = i(j+1)$ and $g = (i+1)(j+1)$, with $a_2 \le i < j < a_3$. Then $e \sim f \sim g$.\\[-1ex]

\noindent Since $G - a_1 = H - b_1$ and $G - a_4 = H - b_4$, for any $a_1 \le p < q < a_4$ we have
\begin{eqnarray*}
pq \in G & \Leftrightarrow \; & pq \in G - a_4 \; \Leftrightarrow \; pq \in H - b_4 \; \Leftrightarrow \; pq \in H \\
& \Leftrightarrow \; & pq \in H - b_1 \; \Leftrightarrow \; pq \in G - a_1 \; \Leftrightarrow \; (p+1)(q+1) \in G.
\end{eqnarray*}
Thus $ij \in G$ if and only if $i'j' \in G$, where $i' = a_2 - 1$, and $j' - i' = j - i$, and also $i(j+1) \in G$ if and only if $i'(j'+1) \in G$. But $j' \in [a_2,a_3)$, so by Claim~1, $i'j' \in G$ if and only if $i'(j'+1) \in G$. Hence
$$ij \in G \; \Leftrightarrow \; i'j' \in G \; \Leftrightarrow \; i'(j'+1) \in G  \; \Leftrightarrow \; i(j+1) \in G$$ as claimed.\\[-1ex]

\noindent Claims 1, 2 and 3 together imply that $[a_2,a_3]$ is a homogeneous block, as required.
\end{proof}

We need one more simple observation.

\begin{lemma}\label{allcliques}
Let $G$ be a graph on $n$ vertices whose components are all cliques. Then $G$ has at least $\ds\frac{n^2}{n + 2e(G)}$ components.
\end{lemma}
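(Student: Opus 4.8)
The plan is to convexity-bound the number of components given the constraint on the total number of vertices and edges. Suppose $G$ has $c$ components, which are cliques on $n_1,\ldots,n_c$ vertices, so $\sum_i n_i = n$ and $2e(G) = \sum_i n_i(n_i-1) = \sum_i n_i^2 - n$. Thus $\sum_i n_i^2 = n + 2e(G)$. Now by the Cauchy--Schwarz inequality (or power-mean), $\left(\sum_i n_i\right)^2 \le c \sum_i n_i^2$, i.e.\ $n^2 \le c\,(n + 2e(G))$, which rearranges to $c \ge \frac{n^2}{n + 2e(G)}$, exactly as claimed.

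The only point needing a word of care is the degenerate case: if $G$ has no vertices then the bound reads $0 \ge 0$ and there is nothing to prove, while if $n \ge 1$ then $n + 2e(G) \ge n \ge 1 > 0$ so the division is legitimate and the Cauchy--Schwarz step is valid as stated. There is no real obstacle here; the entire content is the single application of Cauchy--Schwarz to the vectors $(n_1,\ldots,n_c)$ and $(1,\ldots,1)$, together with the identity $\sum_i n_i^2 = n + 2e(G)$ that encodes "every component is a clique". I would write this up in three or four lines.

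(If a more self-contained argument is preferred over invoking Cauchy--Schwarz by name, one can instead note that among all ways of writing $n = \sum n_i$ with $c$ parts, $\sum n_i^2$ is minimized when the parts are as equal as possible, giving $\sum n_i^2 \ge n^2/c$; but the Cauchy--Schwarz phrasing is cleanest.)
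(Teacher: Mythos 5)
Your proof is correct and is essentially the same convexity argument as in the paper: the paper fixes the number $c$ of components and minimizes $e(G)$ by taking roughly equal-sized cliques (equivalently $\sum n_i^2 \ge n^2/c$), which is exactly what your Cauchy--Schwarz step gives. Your phrasing via Cauchy--Schwarz and the identity $\sum_i n_i^2 = n + 2e(G)$ is, if anything, a little cleaner than the paper's informal ``roughly the same size'' wording, but it is the same proof.
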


\begin{proof}
Let us fix the number of components, $x$, and let $k = n/x$ denote the average size of a component. The expression $n^2 / (n + 2e(G))$ is maximized when the number of edges is minimized, i.e., when the cliques all have (roughly) the same size. Thus, 
$$e(G) \; \ge \; {k \choose 2} \frac{n}{k} \; = \; \frac{n(k-1)}{2}.$$
Rearranging the above expression gives the required result.
\end{proof}

We can now reap our reward: the proof of Proposition~\ref{difftypes}.

\begin{proof}[Proof of Proposition~\ref{difftypes}]
Let $m,n,t \in \N$, and let $\G = \{G_1, \ldots, G_t\}$ be a collection of distinct ordered graphs on $[n]$, with $m(G_i) \le m$ for each $i \in [t]$. We are required to show that
$$|\d \G| \; \ge \; \frac{t(n-m)^2}{2(n - m) + 32t}.$$

First, for each ordered graph $G_i$, choose a set $X_i \subset V(G_i)$, with $|X_i| = m(G_i)$, such that $G_i - X_i$ only has homogeneous blocks of size at most two. This is possible because if $B$ is a homogeneous block in $G$ with $|B| \ge 3$ and $v \in B$, then $B - v$ is a homogeneous block in $G - v$. The set $X_i$ represents the excess of $G_i$.

Now, for each pair $\{i,j\} \subset [t]$, let
$$P(i,j) \; = \; \left\{ (u,v) \in \big( V(G_i) \setminus X_i, V(G_j) \setminus X_j\big) \,:\, G_i - u = G_j - v \right\}.$$
The result follows from the following claim.\\[-1ex]

\noindent \textbf{Claim}: $|P(i,j)| \le 32$ for each $i,j \in [t]$ with $i \neq j$.

\begin{proof}[Proof of claim]
Let $i,j \in [t]$, and let $H$ be the bipartite graph with vertex set $V(G_i) \cup V(G_j)$ and edge set $P(i,j)$. We begin with a simple but crucial observation.\\[-1ex]

\noindent \ul{Subclaim}: $d_H(u) \le 2$ for every $u \in V(H)$.\\[-1ex]

Suppose $(u,v_1), (u,v_2), (u,v_3) \in P(i,j)$. Then $G_j - v_1 =  G_j - v_2 = G_j - v_3$. Therefore, by Lemma~\ref{3sacrowd}, the set $\{v_1,v_2,v_3\}$ is contained in some homogeneous block of $G_j - X_j$. But by the definition of $X_j$, $G_j - X_j$ only has homogeneous blocks of size at most two. Thus $d_H(u) \le 2$ for every $u \in V(H)$, as claimed.\\[-1ex]

Now, suppose $|P(i,j)| \ge 33$. By the subclaim the components of $H$ are paths and cycles, so there exists a matching in $H$ consisting of at least half of the edges of $H$, i.e., with at least 17 edges. Without loss of generality, at least nine of these edges $(a_k,b_k)$ (where $a_k \in G_i$ and $b_k \in G_j$) have $a_k \le b_k$.

Consider the poset formed by these nine intervals $[a_k,b_k]$ in the interval order. A chain of height three in the poset corresponds to three disjoint intervals, and by Lemma~\ref{3dj}, if three such intervals exist then $G_i = G_j$, which is a contradiction. Thus the poset has height at most two, and so it has width at least five.

Let $[a_1,b_1], \ldots, [a_5,b_5]$ be five non-comparable intervals, with $a_1 < \cdots < a_5$ say, such that $G_i - a_k = G_j - b_k$ for each $k \in [5]$. Since the intervals are incomparable, they have a common point, so $a_\ell \le b_k$ for each $k,\ell \in [5]$. Now, by Lemma~\ref{4inarow} the set $\{a_2,a_3,a_4\}$ lies in a homogeneous block. But all homogeneous blocks have size at most two, so this is a contradiction, and so $|P(i,j)| \le 32$, as claimed.
\end{proof}

It is now easy to deduce that $|\d \G| \ge (n-m)t/2 - O(t^2)$. However, we shall work a little to improve the error term. Consider the graph $J$ with vertex set $\bigcup_i V(G_i) \setminus X_i$, and edge set
$$E(J) \: = \: \bigcup_{i \neq j} P(i,j) \, \cup \, \bigcup_i \{uv : u, v \in V(G_i) \setminus X_i, G_i - u = G_i - v\}.$$
Note that the components of $J$ are all cliques, and that $|J| \ge t(n-m)$, by the definition of $X_i$. Moreover, we have $e(J) \le 32{t \choose 2} + |J|/2$, since by the claim there are at most $32{t \choose 2}$ `cross-edges', and by Lemma~\ref{3sacrowd} and the definition of $X_i$, the set $V(G_i) \setminus X_i$ induces a matching for each $i \in [t]$.

Thus, applying Lemma~\ref{allcliques} to the graph $J$, we deduce that $J$ has at least
$$\frac{|J|^2}{2|J| + 32t^2} \; \ge \; \frac{t(n-m)^2}{2(n - m) + 32t}$$
components. Since each component corresponds to a distinct ordered graph in the shadow, this is a lower bound for $|\d\G|$.
\end{proof}

It would be interesting to determine the best possible lower bound in Proposition~\ref{difftypes}. In particular, is it true that when $t = 2$ and $m = 0$, then $|\d \G| \ge n - 1$?

\section{Proof of Theorem~\ref{shadow}}\label{proofsec}

In this section we shall put together the pieces and prove Theorem~\ref{shadow} and Corollary~\ref{hered}. The proof involves some simple calculations, which we collect in the following observation.

\begin{obs}\label{calc}
If $2 \le m \le n/2$ and $n \ge 135$, then
$$\frac{(m+1)(n-m)^2}{2n + 30m + 32} \; \ge \; n - 1.$$
If $t \ge 3$ and $n \ge 4m + 94$ then
$$\frac{t(n-m)^2}{2(n - m) + 32t} \; \ge \; n - 1.$$
\end{obs}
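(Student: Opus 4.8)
The plan is to dispatch each of the two inequalities by elementary calculus, reducing each to checking a polynomial inequality at one or two points.

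For the first inequality, I would clear the positive denominator and work with
$$g(m) \; := \; (m+1)(n-m)^2 - (n-1)(2n + 30m + 32),$$
the claim being that $g(m) \ge 0$ for all real $m \in [2, n/2]$. The key point is that $g$ is concave on this interval: only the cubic term $(m+1)(n-m)^2$ is nonlinear, and a direct computation gives $g''(m) = 6m - 4n + 2$, which is increasing in $m$ and takes the value $2 - n < 0$ at the right endpoint $m = n/2$, hence is negative throughout $[2, n/2]$. A concave function on an interval attains its minimum at an endpoint, so it suffices to check $g(2) \ge 0$ and $g(n/2) \ge 0$. Expanding gives $g(2) = n^2 - 102n + 104$, which is positive whenever $n \ge 102$, and $8\,g(n/2) = n^3 - 134n^2 - 120n + 256 = n^2(n-134) - 120n + 256$, which is positive whenever $n \ge 135$ (for such $n$ the first summand already exceeds $n^2 > 120n$). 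Since $n \ge 136$, both bounds hold; and when $n$ is odd the integer values of $m$ in play lie in $[2, (n-1)/2] \subset [2, n/2]$, so nothing is lost by treating $m$ as a real variable.

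For the second inequality, I would substitute $s := n - m$, so that the left-hand side is $h(t) := \dfrac{t s^2}{2s + 32t}$. Differentiating, $h'(t) = \dfrac{2s^3}{(2s + 32t)^2} > 0$, so $h$ is increasing in $t$, and for $t \ge 3$ we have $h(t) \ge h(3) = \dfrac{3s^2}{2s + 96}$. Meanwhile the hypothesis $n \ge 4m + 94$ rearranges to $4s = 4(n-m) \ge 3n + 94$, i.e. $n \le (4s - 94)/3$, so $n - 1 \le (4s - 97)/3$. Thus it is enough to prove
$$\frac{3s^2}{2s + 96} \; \ge \; \frac{4s - 97}{3};$$
cross-multiplying (both denominators are positive) this becomes $9s^2 \ge (4s-97)(2s+96) = 8s^2 + 190s - 9312$, i.e. $s^2 - 190s + 9312 \ge 0$, which holds for every real $s$ since the discriminant $190^2 - 4 \cdot 9312 = -1148$ is negative.

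I do not expect a genuine obstacle here: both parts are routine algebra once the right normalisation is in hand. The only mild subtlety is the concavity observation in the first part, which is what lets one avoid a root analysis of the cubic $g$ and instead verify the inequality at just the two endpoints $m = 2$ and $m = n/2$; similarly, the monotonicity of $h$ in $t$ is what reduces the second part to the single case $t = 3$.
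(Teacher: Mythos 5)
Your proof is correct and follows essentially the same route as the paper's: for the first inequality the paper also reduces to checking the endpoints $m=2$ and $m=n/2$ (arriving at the same polynomials $n^2-102n+104$ and, in effect, $n^3-134n^2-120n+256$), and for the second it likewise clears denominators to a quadratic inequality that the hypotheses $t \ge 3$ and $n \ge 4m+94$ guarantee. The only difference is cosmetic: you justify the endpoint reduction via concavity of $g$ (which the paper dismisses as ``simple calculus'') and you normalise the second part with $s = n-m$ and monotonicity in $t$, whereas the paper treats it as a quadratic in $n$; both are sound.
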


\begin{proof}
For the first part, simple calculus shows that, for fixed $n$, the minimum of the left hand side occurs at one of the extreme points, $m = 2$ and $m = n/2$. When $m = 2$ the inequality reduces to $n^2 - 102n + 104 \ge 0$, and when $m = n/2$ it is implied by the inequality $n^2 - 134n - 120 \ge 0$.

For the second part, by rearranging we see that the required inequality is equivalent to
$$(t-2)n^2 - \big( 2m(t-1) +32t-2 \big) n + \big( tm^2 - 2m + 32t \big) \, \ge \, 0.$$
For $t \ge 3$, this inequality is of the form $an^2 -bn + c \ge 0$, where $a,b,c$ are all positive.  Moreover $b/a$ is decreasing in $t$, so is at most $4m+94$, and therefore the inequality holds whenever $n \ge 4m+94$, as required.
\end{proof}

We can now prove Theorem~\ref{shadow}.

\begin{proof}[Proof of Theorem~\ref{shadow}]
Let $135 \le n \in \N$, let $\G$ be a collection of ordered graphs on $[n]$, and suppose that $|\G| \le n - 1$. We are required to show that $|\d\G| \ge |\G|$.

By Lemma~\ref{Gline}, either $|\d \G| \ge |\G|$, or there exists a type $T$ such that $|\d_\tau\G_T| < |\G_T|$, and in the latter case either $|\G_T| \ge 2m_n(T)$ or $\G_T$ contains a line (for every such type $T$). Among types such that $|\d_\tau\G_T| < |\G_T|$, choose $T$ with $m_n(T)$ maximal.

Suppose first that $m_n(T) \ge n/2$, in which case $|\G_T| \le |\G| \le n - 1 < 2m_n(T)$. Then $\G_T$ must contain a line $\L$, and by Lemma~\ref{2mT}, we have either $|\d \G_T| \ge 2m_n(T) +1 > |\G|$, or $\L \subset \Q_n$. Note that $m(G) \ge n - 6$ for every $G \in \Q_n$. Moreover, if $\L \subset \Q_n$ then, by Lemma~\ref{Qn}, either $|\d\G| \ge |\G|$ or there exists some graph $H \in \G$ with $m(H) \le 1$. 

But $|\d_\tau \G_T| \ge |\d_\tau L| = m_n(T) \ge n - 6$, and by Proposition~\ref{vsmall}, $\d H$ contains at least $(n-2)/4$ ordered graphs with excess at most $6 < n - 7$. Thus 
$$|\d \G| \; \ge \; |\d_\tau \G_T| \,+\, \frac{n-2}{4} \; \ge \; \frac{5n-26}{4} \; > \; n - 1,$$
since $n > 22$, and so we are done in this case.

Hence we may assume from now on that $m := m_n(T) < n/2$. Recall that a line in $\G_T$ contains $m_n(T) + 1$ ordered graphs, so, in either case (unless $m = 0$), $\G_T$ contains at least this many distinct ordered graphs. By Proposition~\ref{difftypes}, it follows that
$$|\d \G| \; \ge \; \frac{(m+1)(n-m)^2}{2n + 30m + 32}.$$
Thus, if $2 \le m \le n/2$ and $n \ge 135$, then $|\d\G| \ge n - 1 \ge |\G|$, by Observation~\ref{calc}.

Therefore we may assume that $m_n(T) \le 1$, and hence that $|\d_\tau \G_{T'}| \ge |\G_{T'}|$ for every type $T'$ with $m_n(T') \ge 2$. But, by Proposition~\ref{vsmall}, applied with $r = 2$, if there exists $G \in \G$ with $m(G) \le 1$, then $\d G$ contains at least $(n-2)/4$ ordered graphs with excess at most 6. If $\G$ contains $t \ge 4$ ordered graphs with excess at most 7, then
$$|\d \G| \; \ge \; \frac{t(n-7)^2}{2(n-7) + 32t} \; \ge \; n - 1,$$
since $n \ge 122$, by Proposition~\ref{difftypes} and Observation~\ref{calc}. Thus, we may assume that all but at most three ordered graphs in $\G$ have excess at least 8, so
$$|\d \G| \; \ge \; \left( \sum_{T \,:\, m_n(T) \ge 8} |\d_\tau \G_T| \right) \,+\, \frac{n-2}{4} \; \ge \; |\G| \,-\, 3 \,+\, \frac{n-2}{4} \; > \; |\G|,$$
and we are done.
\end{proof}

Finally, we deduce Corollary~\ref{hered} from Theorem~\ref{shadow}.

\begin{proof}[Proof of Corollary~\ref{hered}]
Let $\P$ be a hereditary property of ordered graphs, let $135 \le k \in \N$, and suppose that $|\P_k| < k$. We are required to prove that $|\P_{k+1}| \le |\P_k|$.

Indeed, suppose that $|\P_{k+1}| > |\P_k|$. Then $\P_{k+1}$ contains a collection of ordered graphs $\G_{k+1}$ with $|\G_{k+1}| = |\P_k| + 1 < k + 1$. Applying Theorem~\ref{shadow} to $\G_{k+1}$, we obtain
$$|\P_k| \; \ge \; |\d\P_{k+1}| \; \ge \; |\d\G_{k+1}| \; \ge \; |\G_{k+1}| \; = \; |\P_k| + 1,$$
which is a contradiction.
\end{proof}

\section{Open problems, and extensions to higher speeds}\label{qsec}

Theorem~\ref{shadow} is only one step towards understanding shadows of collections of ordered graphs, and we expect that corresponding results should hold for larger families / for hereditary properties with higher speeds. The following questions and conjectures make this explicit.

By the results of~\cite{BBMorder}, there exists a function $f : \N \to \Z$ such that the following holds for every $k \in \N$. If $\P$ is a hereditary property of ordered graphs, and
$$\ds\limsup_{n \to \infty} \big( |\P_n| - (k-1)n \big) \; = \; \infty,$$
then $|\P_n| \ge kn - f(k)$ for every sufficiently large $n \in \N$. Let $f(k)$ be chosen to be minimal, subject to this condition. We remark that $f(1) = 0$, and that $f(k) \ge (k-1)(k+4)/2$. To see the latter, consider the smallest hereditary property containing all the ordered graphs with edge set $E(G) = \{i(i+1),j(j+1)\}$ where $i \le k - 1$ and $i+1 < j$. It is likely, in fact, that this bound is optimal.


\begin{conj}\label{generalk}
Let $n,k \in \N$, and let $f(k) \in \Z$ be as described above. Let $\G$ be a collection of ordered graphs on $[n]$, and suppose that $|\G| < kn - f(k)$. Then $|\d\G| \ge |\G| - k + 1$.
\end{conj}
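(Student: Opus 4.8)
The plan is to mimic the structure of the proof of Theorem~\ref{shadow}, generalizing each ingredient to the regime $|\G| < kn - f(k)$. Most of the toolkit from Sections~\ref{setsec}--\ref{smallsec} already works for arbitrary collections; the point is that one should now tolerate a small deficiency ($|\d\G| \ge |\G| - k + 1$ rather than $|\d\G| \ge |\G|$), and this slack should be exactly absorbed by a bounded number of `degenerate' types. First I would set up the type decomposition $\G = \bigcup_T \G_T$ as before, and observe that $\d_\tau\G \subseteq \d\G$, so it suffices to handle each type separately and then argue that the deficiencies across types do not accumulate beyond $k-1$. For a type $T$ with large excess $m_n(T)$, Lemma~\ref{line} (via $\phi$) together with Lemma~\ref{2mT} gives $|\d\G_T| \ge \min\{2m_n(T)+1, |\G_T|\}$, which is at least $|\G_T|$ whenever $|\G_T| \le 2m_n(T)$; since $|\G| < kn$ and the relevant $m_n(T)$ will be of order $n$, only $O(k)$ types can violate $|\G_T| \le 2m_n(T)$, and those contribute a bounded deficiency. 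For types with small excess, Lemma~\ref{difftypes} gives $|\d\G_T| \ge \frac{|\G_T|(n-m)^2}{2(n-m) + 32|\G_T|}$, which exceeds $kn$ once $|\G_T| \gtrsim 2k$ and $n$ is large; so again only a bounded number of small-excess types can be `problematic', and on each of those the trivial bound $|\d G| \ge (n - m(G))/2$ from Lemma~\ref{a-m/2} already beats $kn - f(k)$ for $n$ large.

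The second main step is the counting that glues the types together. Here I would argue that the map $T \mapsto \G_T$ can have at most $k-1$ types on which the per-type inequality $|\d\G_T| \ge |\G_T|$ fails, and on each such type the failure is by at most... well, this is where care is needed. A cleaner route: define a type to be \emph{heavy} if $|\G_T|$ is large enough that the relevant lemma (Lemma~\ref{2mT} for large excess, Lemma~\ref{difftypes} for small excess) forces $|\d\G_T| \ge |\G_T|$ outright, and \emph{light} otherwise. Since $\sum_T |\G_T| = |\G| < kn$, and every light type has $|\G_T|$ bounded below only trivially, the issue is bounding the \emph{number} of light types. Light large-excess types have $m_n(T) > |\G_T|/2$, and since distinct types of excess $\ge n/2$ use up disjoint... no — one needs that the excesses $m_n(T)$ over light types with, say, $m_n(T) \ge n/2$ cannot all be large simultaneously unless there are few of them, which follows because $\sum |\G_T| < kn$ bounds how many types can simultaneously have $|\G_T|$ comparable to $m_n(T) \ge n/2$. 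For light small-excess types, Lemma~\ref{difftypes} applied to $\G_T$ with a single graph already yields $|\d\G_T| \ge (n - m)/2 \gg k$, so a light small-excess type still contributes a shadow of size $\gtrsim n$; hence at most $O(k)$ of them can fail, and by choosing the $f(k)$-dependence correctly this is absorbed.

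The extremal example given after the statement — the hereditary property generated by edge sets $\{i(i+1), j(j+1)\}$ with $i \le k-1$, $i+1 < j$ — is the key to calibrating constants: it shows the $-k+1$ term is necessary and suggests which types are genuinely degenerate (those corresponding to a `frozen' left-hand configuration with a single roaming block). I would use this picture to pin down exactly which $k-1$ types are allowed to have deficiency $1$, namely the types whose $\G_T$ is (a subset of) a line of length exactly $m_n(T)+1$ sitting inside a type of excess $\ge n/2$ — for such a line Lemma~\ref{2mT} gives only $|\d\G_T| \ge |\G_T|$ when $|\G_T| \le 2m_n(T)+1$ but can give $|\G_T| - 1$ in the boundary case — and show there can be at most $k-1$ of them because each consumes $\Theta(n)$ of the budget $|\G| < kn$.

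The main obstacle I expect is precisely this gluing/accounting step: controlling the \emph{total} deficiency $\sum_T (|\G_T| - |\d\G_T|)_+$ rather than each term. Unlike in Theorem~\ref{shadow}, where a single bad type sufficed for a contradiction, here several bad types can coexist, and one must show their deficiencies sum to at most $k-1$. This requires either (i) a sharpened version of Lemma~\ref{2mT} and Lemma~\ref{line} giving $|\d\G_T| \ge |\G_T| - 1$ with equality characterized, combined with a pigeonhole on the budget, or (ii) a genuinely new argument that the shadows $\d_\tau\G_T$ of the bad types are not just disjoint but that their union with the shadows of the good types loses at most $k-1$ elements globally. Option (i) seems more tractable: one would prove that in the equality case of Lemma~\ref{line} the set $A$ is (close to) a line, hence $\G_T$ (close to) a line in a type of excess $\Theta(n)$, hence each bad type eats $\Theta(n)$ of the budget, and there are $< kn / \Theta(n) = O(k)$ of them — with the constant forced down to $k-1$ by the $f(k)$ slack. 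Making the $f(k)$ bookkeeping exactly match the conjectured extremal value $(k-1)(k+4)/2$ will be the delicate part, and may require the refined determination of $f(k)$ that is only asserted (not proved) in the excerpt; for a first version of the result one might settle for $|\d\G| \ge |\G| - k + 1$ under the stronger hypothesis $|\G| < kn - Ck^2$ for a suitable absolute constant $C$ and $n$ sufficiently large.
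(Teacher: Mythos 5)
This statement is an open conjecture in the paper: the authors give no proof, and they explicitly remark that even extending their own techniques would require, at a minimum, a more general version of Lemma~\ref{line}. Your text is accordingly a plan rather than a proof, and the step you yourself flag as the main obstacle --- bounding the total deficiency $\sum_T \big(|\G_T| - |\d_\tau\G_T|\big)$ over bad types by $k-1$ --- is exactly where it breaks down. Concretely, nothing in the paper bounds the deficiency of a single bad type by $1$: Lemma~\ref{line} (and the underlying Corollary~11 of Bollob\'as--Leader) gives $|\d A| \ge |A| - 1$ only for $|A| < 2m$, and once $|\G|$ is allowed to be of order $kn$ a single type can have $|\G_T|$ far exceeding $2m_n(T)$, in which case its within-type deficiency can be much larger than $1$ (for instance $\phi(\G_T)$ could be all of $\Z^{d_T}(m)$, whose shadow is smaller by an unbounded amount). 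So the accounting scheme ``at most $k-1$ bad types, each losing at most one element'' is not available from the existing lemmas; what is needed is a genuine generalization of Lemma~\ref{line} for sets of size less than roughly $(k+1)m$, with deficiency at most $k$ and a stability statement characterizing near-equality (unions of $k$ lines). You name this as option (i), but it is asserted, not proved, and it is the essential new ingredient.

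The second concrete gap is the overlap problem in gluing types together. For $k=1$ the paper's argument is a contradiction: a single bad type already forces $|\d\G| \ge n-1 \ge |\G|$, so contributions from different types never have to be added. For general $k$ the target $|\G| - k + 1$ can be close to $kn$, so per-type lower bounds must be summed; but the full shadows $\d\G_T$ of distinct types are not disjoint (only the within-type shadows $\d_\tau\G_T$ are), so your inference ``each light small-excess type contributes a shadow of size $\gtrsim n$, hence at most $O(k)$ of them can fail'' does not follow --- many such types can share essentially the same shadow graphs. The paper circumvents this for $k=1$ by applying Lemma~\ref{difftypes} once to the whole family of small-excess graphs and using Observation~\ref{typechange} to keep counts disjoint; how to make the analogous global count lose at most $k-1$ elements is precisely the gluing step you leave open. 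Your closing suggestion to prove a weaker statement under $|\G| < kn - Ck^2$ for large $n$ is a sensible intermediate goal, but as written the proposal identifies the right difficulties without overcoming them, so it does not constitute a proof of Conjecture~\ref{generalk}.
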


Note that Theorem~\ref{shadow} is exactly Conjecture~\ref{generalk} in the case $k = 1$ and $n \ge 135$, and so Conjecture~\ref{generalk} includes the extension of Theorem~\ref{shadow} to all $n \in \N$. It is conceivable that the techniques of this paper could be extended in order to prove the conjecture for all $k \in \N$ (and sufficiently large $n$), although one would require a more general version of Lemma~\ref{line}. The following problem, on the other hand, is likely to require further new ideas.

For each $k \in \N$, let $h(k)$ denote the smallest possible quadratic speed of a hereditary property of ordered graphs, $\P$, i.e., the largest integer such that $|\P_n| = \Theta(n^2)$ implies $|\P_k| \ge h(k)$.

\begin{qu}\label{quadqu}
Let $3 \le n \in \N$, and let $h(n)$ be as described above. Let $\G$ be a collection of ordered graphs on $[n]$, and suppose $|\G| < h(n)$. Is it true that $|\d \G| \ge |\G| - n + 3$?
\end{qu}

Question~\ref{quadqu} is partly motivated by the following conjecture about the possible speeds of a hereditary property of ordered graphs, $\P$. Recall that, by the main theorem of~\cite{BBMorder}, if $|\P_m| < F_m$ (the $m^{th}$ Fibonacci number) for some $m \in \N$, then $|\P_n| = \Theta(n^k)$ for some $k \in \N_0$. Suppose $|\P_n| = \Theta(n^k)$; what is the minimal possible value of $|\P_n|$?

Consider the collection $\R(k)$ of ordered graphs with edge set $\{i_1(i_1+1), \ldots, i_t(i_t+1)\}$, where $i_j + 1 < i_{j+1}$ for each $j \in [t-1]$, and $t \le k$. Then $\R(k)$ is hereditary, and has speed ${{n-k} \choose k} + {{n-k+1} \choose {k-1}} + \ldots + {n \choose 0}$. We conjecture that this is the minimal speed of order $n^k$.

\begin{conj}\label{quadconj}
Let $\P$ be a hereditary property of ordered graphs, and let $k \in \N$. If $|\P_n| = \Theta(n^k)$, then
$$|\P_n| \; \ge \; \sum_{i=0}^k {{n-i} \choose i}$$
for every $n \in \N$.
\end{conj}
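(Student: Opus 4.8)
I would attack Conjecture~\ref{quadconj} by induction on $k$, the case $k=1$ being the statement from~\cite{order} that an unbounded speed is at least $n=\binom{n}{0}+\binom{n-1}{1}$. Write $g_k(n)=\sum_{i=0}^{k}\binom{n-i}{i}$ for the conjectured extremal speed; the key arithmetic fact, proved by the Pascal identity $\binom{m}{i}-\binom{m-1}{i}=\binom{m-1}{i-1}$, is the recursion $g_k(n)=g_k(n-1)+g_{k-1}(n-2)$. Since $\P$ is hereditary, $\d\P_n\subseteq\P_{n-1}$, so the plan is to establish a shadow inequality precise enough that, once $|\P_n|\ge g_k(n)$, taking a subcollection $\G\subseteq\P_n$ of size $g_k(n)-1$ forces $|\P_{n-1}|\ge|\d\G|\ge g_k(n-1)$. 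That makes the set $\{n:|\P_n|\ge g_k(n)\}$ a down-set, so it suffices to verify $|\P_n|\ge g_k(n)$ for all \emph{large} $n$; and by~\cite{order}, for large $n$ the speed equals a fixed polynomial $p_\P(n)=\sum_{i\le k}a_i\binom{n}{i}$ with $a_k\ge1$ (as $|\P_n|=\Theta(n^k)$). Thus the whole conjecture reduces to: (i) the downward-transfer shadow inequality, and (ii) showing $p_\P(n)\ge g_k(n)$ for all large $n$.

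\textbf{The shadow inequality.} The inequality I would aim to prove is the polynomial analogue of Conjecture~\ref{generalk} and Question~\ref{quadqu}: if $\G$ is a collection of ordered graphs on $[n]$ with $|\G|<g_k(n)$, then $|\d\G|\ge|\G|-g_{k-1}(n-2)+1$. Combined with the recursion, a collection of size $g_k(n)-1$ then has shadow of size at least $g_k(n)-g_{k-1}(n-2)=g_k(n-1)$, exactly as needed above. To prove it I would follow the architecture of the present paper but with higher-dimensional inputs: first a generalization of Lemma~\ref{line}, i.e. a defect version of the Bollob\'as--Leader isoperimetric inequality on $\Z^d(n)$ characterising sets $A$ with $|\d A|<|A|-c$ for $|A|<(c+1)n$ (the extremal configurations are now built from several lines, mirroring the $\Q_k$-type families); then the \emph{types/excess} decomposition of Section~\ref{typesec}, now tracking the $\le k$ ``large'' homogeneous blocks instead of $\le2$; and finally an extension of the small-excess analysis of Section~\ref{smallsec} (Lemmas~\ref{3sacrowd}, \ref{4inarow} and \ref{difftypes}), which should go through with the constants in Observation~\ref{calc} replaced by $k$-dependent ones, at the cost of a harmless condition $n\ge n_0(k)$.

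\textbf{Finishing the large-$n$ case.} For the eventual polynomial: if $a_k\ge2$ then for large $n$, $p_\P(n)\ge2\binom{n}{k}+O(n^{k-1})$, while $g_k(n)=\binom{n-k}{k}+O(n^{k-1})=\binom{n}{k}+O(n^{k-1})$, and $2\binom{n}{k}-\binom{n}{k}=\Theta(n^k)$ dominates, so $p_\P(n)\ge g_k(n)$; downward transfer then gives the conjecture in this case. The genuinely hard case is $a_k=1$: here $p_\P(n)$ and $g_k(n)$ agree in top order, and one must compare the degree-$(k-1)$ coefficients, which amounts to proving a \emph{second-level jump} --- that $a_k=1$ forces $a_{k-1}$ to be at least the (explicitly computable) coefficient $\binom{k}{2}+k$-type quantity coming from $\binom{n-k}{k}+\binom{n-k+1}{k-1}$. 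I expect the only route is structural: show that $a_k=1$ forces $\P$ to contain (up to the symmetries of the paper) a $\Q_k$-like core, and then either read off $|\P_n|\ge g_k(n)$ directly, or peel off one level and apply the inductive hypothesis for $k-1$ to a derived hereditary property of degree $k-1$.

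\textbf{Main obstacle.} The bottleneck is twofold and is precisely what the authors flag after Question~\ref{quadqu}: proving the defect-$c$ isoperimetric inequality on $\Z^d(n)$ with a clean description of near-equality (Lemma~\ref{line} is only the $c=1$ case), and, even granting it, carrying out the $a_k=1$ structural dichotomy --- identifying the extremal/near-extremal hereditary properties of each polynomial degree, which the present compression-and-counting toolkit does not obviously reach. Everything else (the recursion for $g_k$, the reduction via~\cite{order}, the down-set argument, removing the $n\ge136$ artifact along the way) I expect to be routine given these two ingredients.
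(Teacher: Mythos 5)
This statement is Conjecture~\ref{quadconj}: the paper offers no proof of it, so there is nothing to compare your argument against, and what you have written is a programme rather than a proof. Several components of the programme are sound: the recursion $g_k(n)=g_k(n-1)+g_{k-1}(n-2)$ is correct; the downward-transfer argument (a shadow inequality with defect $g_{k-1}(n-2)-1$ for families of size below $g_k(n)$ makes $\{n:|\P_n|\ge g_k(n)\}$ a down-set) is valid conditional on that inequality; and the reduction of the large-$n$ case to the eventual polynomial $\sum_i a_i\binom{n}{i}$ of~\cite{order}, with the easy subcase $a_k\ge 2$, is fine. But the two ingredients carrying all of the mathematical content are left open, and you acknowledge as much.

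Concretely: (i) the shadow inequality you posit is, for $k=2$, precisely Question~\ref{quadqu} (since $g_1(n-2)=n-2$, so $|\G|-g_1(n-2)+1=|\G|-n+3$), and the authors state explicitly that they suspect Question~\ref{quadqu} is \emph{much harder} than Conjecture~\ref{quadconj}; your route thus derives the conjecture from something believed to be strictly harder, which is a legitimate implication but inverts the intended order of attack. The required defect-$c$ stability version of Lemma~\ref{line} (classifying sets in $\Z^d(n)$ with $|\d A|<|A|-c$ as unions of lines) is not known and is not a routine extension of the compression argument of Section~\ref{setsec}, and neither Lemma~\ref{4inarow} nor the counting in Lemma~\ref{difftypes} obviously scales to $k$ large homogeneous blocks. (ii) The case $a_k=1$, which you correctly identify as the heart of the matter, is reduced to ``show $\P$ contains a $\Q_k$-like core'' with no mechanism offered; but identifying the minimal hereditary properties of each polynomial degree is essentially the content of the conjecture itself, so at this point the argument is circular. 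A more plausible route to Conjecture~\ref{quadconj} alone would bypass shadows entirely and extend the structural analysis of~\cite{order}, which already gives the case $k=1$. Finally, even granting both ingredients, your shadow inequality would hold only for $n\ge n_0(k)$, so the ``for every $n\in\N$'' clause would still require a separate argument for small $n$.
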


Conjecture~\ref{quadconj} holds in the case $k = 1$ (by the results of~\cite{BBMorder}). Moreover, if Conjecture~\ref{quadconj} is true, then the bound in Question~\ref{quadqu} (if true) is best possible. To see this, consider the collection $\R(2)$ minus the empty ordered graph: it has ${{n-2} \choose 2} + n - 1$ elements, and there are ${{n-3} \choose 2} + n - 1$ ordered graphs in its shadow. We suspect that Question~\ref{quadqu} is much harder than Conjecture~\ref{quadconj}.

Finally, we remark that the problem considered in this paper is not the only natural two-dimensional analogue of that considered by Kruskal and Katona. For example, one could alternatively consider a collection of (labelled) subgraphs of the complete graph on $\{1,\ldots,n\}$ (all with $k$ vertices, say). 

To be precise, given a graph $G$ with (labelled) vertex set $V(G) \subset \{1,\ldots,n\}$, define 
$$\d G \; := \; \big\{ H \, : \, H = G - v \textup{ for some } v \in V(G) \big\},$$
where $G - v$ is the induced subgraph of $G$ with vertex set $V(G) \setminus \{v\}$. If $\G$ is a collection of such graphs, then define the shadow of $\G$ to be $\d \G \; := \; \bigcup_{G \in \G} \d G$. 

\begin{qu}\label{labelledq1}
Let $n,k \in \N$, and let $\G$ be a family of labelled graphs, with $|V(G)| = k$  and $V(G) \subset [n]$ for each $G \in \G$. Given $|\G|$, what is the minimum possible size of $|\d\G|$?
\end{qu}

In the following question, one gets a tight family by taking $\G$ to be the collection of all graphs with $k$ vertices and labels from $[m]$.

\begin{qu}\label{labelledq2}
Let $\G$ be a family of labelled graphs as described in Question~\ref{labelledq1}, and let $m \in \N$ with $k \le m \le n$. Is it true that if $|\G| \ge \ds 2^{k \choose 2} {m \choose k}$, then $|\d\G| \ge \ds 2^{{k-1} \choose 2}{m \choose {k-1}}$?
\end{qu}

Similar questions could also be asked for families of unlabelled graphs.

\section{Acknowledgements}

The authors would like to thank the anonymous referee for finding an error in the proof of an earlier version of Lemma~\ref{Qn}.


\begin{thebibliography}{99}

\bibitem{Alek} V.E.~Alekseev, On the entropy values of hereditary classes of graphs, {\em Discrete Math. Appl.}, {\bf 3} (1993), 191--199.

\bibitem{ABBM} N. Alon, J. Balogh, B. Bollob\'as and R. Morris, The structure of almost all graphs in a hereditary property, to appear in \emph{J. Combin. Theory Ser. B.}

\bibitem{ABS} N. Alon, I. Benjamini and A. Stacey, Percolation on finite graphs and isoperimetric inequalities, \emph{Ann. Prob.}, \textbf{32} (2004), 1727--1745.

\bibitem{BB} J.~Balogh and B.~Bollob\'as, Hereditary properties of words, \emph{Theoret. Informatics Appl.}, \textbf{39} (2005), 49--65.

\bibitem{BBMorder} J.~Balogh, B.~Bollob\'as and R.~Morris, Hereditary properties of ordered graphs, \emph{Topics in Discrete Mathematics} (special edition for J.~Ne\v{s}et\v{r}il, eds. M.~Klazar, J.~Kratochv\'il, M.~Loebl, J.~Matou\v{s}ek, R.~Thomas and P.~Valtr), Springer, \textbf{26} (2006), 179--213.

\bibitem{BBMparts} J.~Balogh, B.~Bollob\'as and R.~Morris, Hereditary properties of partitions, ordered graphs and ordered hypergraphs, \emph{European J. Combin.}, \textbf{27} (2006), 1263--1281.

\bibitem{BBS} J.~Balogh, B.~Bollob\'as and M.~Simonovits, On the number of graphs without forbidden subgraphs, {\em J. Combin. Theory Ser. B.}, \textbf{91} (2004), 1--24.

\bibitem{BBW1} J.~Balogh, B.~Bollob\'as and D.~Weinreich, The speed of hereditary properties of graphs, {\em J. Combin. Theory Ser. B}, {\bf 79} (2000), 131--156.

\bibitem{BBW2} J.~Balogh, B.~Bollob\'as and D.~Weinreich, A jump to the Bell number for hereditary graph properties, \emph{J. Combin. Theory Ser. B},  \textbf{95} (2005), 29--48.


\bibitem{BezBl} S.~Bezrukov and A.~Blokhuis, A Kruskal-Katona type theorem for the linear lattice, \emph{European J. Combin.}, \textbf{20} (1999) 123--130.	

\bibitem{Bela} B.~Bollob\'as, Combinatorics, Cambridge University Press, 1986.

\bibitem{ICM} B.~Bollob\'as, Hereditary properties of graphs: asymptotic enumeration, global structure, and colouring, in  {\em Proceedings of the International Congress of Mathematicians}, Vol. III (Berlin, 1998), {\em Doc. Math.} 1998, Extra Vol. III, 333--342 (electronic).

\bibitem{BL} B.~Bollob\'as and I.~Leader, Compressions and isoperimetric inequalities, \emph{J. Combin. Theory Ser. A}, \textbf{56} (1990), 47--62.

\bibitem{BT1} B. Bollob\'as and A. Thomason, Threshold functions, \emph{Combinatorica}, \textbf{7} (1987), 35--38.

\bibitem{BT2}  B.~Bollob\'as and A.~Thomason, Projections of bodies and hereditary properties of hypergraphs, {\em Bull. London Math. Soc.}, {\bf 27} (1995), 417--424.

\bibitem{BKKKL} J. Bourgain, J. Kahn, G. Kalai, Y. Katznelson and N. Linial, The influence of variables in product spaces, \emph{Israel J. Math.}, \textbf{77} (1992), 55--64.

\bibitem{BGP} G. Brightwell, D.A. Grable and H.J. Pr\"omel, Forbidden induced partial orders, {\em Discrete Math.}, {\bf 201} (1999), 53--90.


\bibitem{EFR} P.~Erd\H{o}s, P.~Frankl and V.~R\"odl, The asymptotic number of graphs not containing a fixed subgraph and a problem for hypergraphs having no exponent, {\em Graphs and Combin.}, {\bf 2} (1986), 113--121.

\bibitem{EKR} P.~Erd\H{o}s, D.J.~Kleitman and B.L.~Rothschild, Asymptotic enumeration of $K_{n}$-free graphs, in {\em  Colloquio Internazionale sulle Teorie Combinatorie} (Rome, 1973), Vol. II, pp. 19--27. {\em Atti dei Convegni Lincei}, {\bf 17}, Accad. Naz. Lincei, Rome, 1976.

\bibitem{FFK} P.~Frankl, Z.~F\"uredi and G.~Kalai, Shadows of colored complexes, \emph{Math. Scand.} \textbf{63} (1988), 169--178.

\bibitem{Fri1} E.~Friedgut, Boolean Functions with Low Average Sensitivity Depend on Few coordinates, \emph{Combinatorica}, \textbf{18} (1) (1998), 27--36.

\bibitem{Fri2} E.~Friedgut, Influences in Product Spaces, KKL and BKKKL Revisited, \emph{Combinatorics Probability and Computing}, \textbf{13} (2004), 17--29.

\bibitem{FK} E. Friedgut and G. Kalai, Every Monotone Graph Property Has a Sharp Threshold, \emph{Proc. Amer. Math. Soc.}, \textbf{124} (1996), 2993--3002

\bibitem{FKN} E. Friedgut, G. Kalai and A.~Naor, Boolean Functions whose Fourier Transform is Concentrated on the First Two Levels, \emph{Advances in Applied Mathematics}, \textbf{29} (2002), 427--437. 

\bibitem{Froh} A.~Frohmader, A Kruskal-Katona type theorem for graphs, \emph{J. Combinatorial Theory, Series A}, \textbf{117} (2010), 17--37.


\bibitem{Harp1} L. H. Harper, Optimal assignment of numbers to vertices, \emph{J. Soc. Ind. Appl. Math.}, \textbf{12} (1964), 131--135.

\bibitem{Harp2} L. H. Harper, Optimal numberings and isoperimetric problems on graphs, \emph{J. Combin. Theory}, \textbf{1} (1966), 385--393.

\bibitem{Hart} S. Hart, A note on the edges of the $n$-cube, \emph{Discrete Math.}, \textbf{14} (1976), 157--163.

\bibitem{HHMTZ} J.~Herzog, T.~Hibi, S.~Murai, N.V.~Trung and X.~Zheng, Kruskal-Katona type theorems for clique complexes arising from chordal and strongly chordal graphs, \emph{Combinatorica}, \textbf{28} (2008), 315--323.


\bibitem{KKL} J. Kahn, G. Kalai and N. Linial, The influence of variables on Boolean functions, \emph{Proc. 29-th Ann. Symp. on Foundations of Comp. Sci.}, 68--80, Computer Society Press, 1988.


\bibitem{Kat} G.O.H. Katona, A theorem on finite sets, in Theory of Graphs (P. Erd\H{o}s and G.O.H. Katona, eds.), Akad\'emiai Kiad\'o, Budapest (1968), 187--207.

\bibitem{Keev} P. Keevash, Shadows and intersections: stability and new proofs, \emph{Advances in Math.}, \textbf{218} (2008), 1685--1703. 

\bibitem{Klaz1} M.~Klazar, The F\"uredi--Hajnal conjecture implies the Stanley--Wilf conjecture, Formal Power Series and Algebraic Combinatorics (D. Krob, A. A. Mikhalev and A. V. Mikhalev, eds.), Springer, Berlin, (2000), 250--255.

\bibitem{Klaz2} M.~Klazar, On growth rates of closed sets of permutations, set partitions, ordered graphs and other objects, \emph{Electr. J. Combinatorics}, \textbf{15} (2008), Research article R75, 22 pages.

\bibitem{Krus} J.B. Kruskal, The number of simplices in a complex, in Mathematical Optimization Techniques, Univ. California Press, Berkeley (1963), 251--278.

\bibitem{Leck} U.~Leck, Nonexistence of a Kruskal-Katona Type Theorem for Subword Orders, \emph{Combinatorica}, \textbf{24} (2004), 305--312.

\bibitem{MT} A.~Marcus and G.~Tardos, Excluded permutation matrices and the Stanley-Wilf conjecture, \emph{J. Combin. Theory Ser. A}, \textbf{107} (2004), 153--160.


\bibitem{ODW} R.~O'Donnell and K.~Wimmer, KKL, Kruskal-Katona, and monotone nets, to appear in \emph{Proceedings of the 50th Annual IEEE Symposium on Foundations of Computer Science (FOCS)}, 2009.

\bibitem{Oss} R.~Osserman, The isoperimetric inequality, \emph{Bull. Amer. Math. Soc.}, \textbf{84} (1978), 1182--1238. 

\bibitem{PT} J.~Pach and G.~Tardos, Forbidden paths and cycles in ordered graphs and matrices, \emph{Israel J. Mathematics}, \textbf{155} (2006), 359--380.

\bibitem{PS} H.J.~Pr\"omel and A. Steger, Excluding induced subgraphs III: A general asymptotic, \emph{Random Struct Algorithms}, \textbf{3} (1992), 19--31.

\bibitem{SZ}  E.R.~Scheinerman and J.~Zito, On the size of hereditary classes of graphs, {\em J. Combin. Theory Ser. B},  {\bf 61} (1994), 16--39.

\bibitem{Steiner} J. Steiner, Einfacher Beweis der isoperimetrischen Haupts\"atze, \emph{J. reine angew Math.} \textbf{18}, (1838), 281--296.

\bibitem{Tala1} M. Talagrand, On boundaries and influences, \emph{Combinatorica}, \textbf{17} (1997), 275--285.

\bibitem{Tala2} M. Talagrand, Concentration of measure and isoperimetric inequalities in product spaces, \emph{Publ. Math. I.H.E.S.}, \textbf{81} (1995), 73--203.

\bibitem{Wieg} J.~Wiegert, The Sagacity of Circles: A History of the Isoperimetric Problem, History of Mathematics SIGMAA 2006 Student Paper Contest Winner, see http://mathdl.maa.org/mathDL/46/

\end{thebibliography}
\end{document}